\documentclass[reqno,english]{amsart}

\usepackage[foot]{amsaddr}
\numberwithin{equation}{section}
\usepackage{amsmath,amsfonts,mathtools, upgreek, amssymb,graphicx,amsthm,enumerate, url, bm, bbm}
\usepackage{stmaryrd,comment,paralist,mathrsfs,booktabs,tabularx,xifthen,xcolor,tikz,setspace}
\usetikzlibrary{decorations.pathmorphing,patterns,shapes,calc,decorations}
\usetikzlibrary{decorations.pathreplacing}
\usepackage[letterpaper]{geometry}
\geometry{verbose,tmargin=1in,bmargin=1in,lmargin=1in,rmargin=1in}
\usepackage[colorinlistoftodos]{todonotes}
\usepackage[colorlinks=true]{hyperref}
\usepackage[noadjust]{cite}
\usepackage[capitalize]{cleveref}

\usepackage{dja_macros} 

\title[sharp thresholds for integer feasibility]{Zero-One laws for random feasibility problems}
\author{Dylan J. Altschuler}
\address{D.J.\ Altschuler\hfill\break  Department of Mathematical Sciences \\ Carnegie Mellon University}
\email{daltschu@andrew.cmu.edu}

\begin{document}
\maketitle

\begin{abstract}
We introduce a general random model of a combinatorial optimization problem with geometric structure that encapsulates both linear programming and integer linear programming. Let $Q$ be a bounded set called the feasible set, $E$ be an arbitrary set called the constraint set, and $A$ be a random linear transform. We define and study the $\ell^q$-\textit{margin},
\[
    \M_q := d_{\ell^q}\pa{AQ, E}\,.
\]
The margin quantifies the feasibility of finding $y \in AQ$ satisfying the constraint $y \in E$.
Our contribution is to establish strong concentration of the $\ell^q$-margin for any $q \in (2,\infty]$, assuming only that $E$ has permutation symmetry. The case of $q = \infty$ is of particular interest in applications---specifically to combinatorial ``balancing'' problems---and is markedly out of the reach of the classical isoperimetric and concentration-of-measure tools that suffice for $q \le 2$.

Generality is a key feature of this result: we assume permutation symmetry of the constraint set and nothing else. This allows us to encode many optimization problems in terms of the margin, including random versions of: the closest vector problem, integer linear feasibility, perceptron-type problems, $\ell^q$-combinatorial discrepancy for $2 \le q \le \infty$, and matrix balancing. Concentration of the margin implies a host of new sharp threshold results in these models, and also greatly simplifies and extends some key known results. 
\end{abstract}

\section{Introduction}
Balancing covariates in experimental design, sparsifying a graph, and training a single-layer neural net are seemingly disparate combinatorial optimization problems. Nonetheless, they as well as a wide range of other problem can be recast as generalizations of the Closest Vector Problem, a core problem in integer programming.
\begin{quote}\label{txt:orig}
    \textit{Find the $\ell^q$-closest point of a set $Q$ to another set $E$.} 
\end{quote}
This is highly non-trivial for general $Q$ and $E$; crucially, properties like convexity are not assumed. A natural random model is to take a random linear transformation of either $Q$ or $E$.  

\begin{definition}[Random Feasibility Problem]
    Let $Q \subset \RR^{N}$ and $E \subset \RR^{M}$ be sets called the feasible set and constraint set, respectively. Fix a matrix $A \in \RR^{M \times N}$ with independent standard normal entries. The $\ell^q$-margin $\M_q(A) := \M_{q,Q,E}(A)$ is defined as:
    \[
        \M_q(A) := \min_{\sigma \in Q} d_{\ell^q}\pa{A\sigma, E}\,,
    \]
    and the $(A,Q,E)$-feasibility problem is the task of determining if $\M_q$ is zero.
\end{definition}    
The word margin is chosen in analogy to terminology from the literature of perceptron models. The margin quantifies the ``distance to satisfiability'' for the following program: find $\sigma \in Q$ with $A\sigma \in E$. In this program, $A$ and $E$ encode a random set of constraints on $Q$, and the margin $\M_q(A)$ measures the least the constraints can be violated by optimizing over $\sigma \in Q$. In particular, the margin is zero if and only if this program is feasible. The exponent $q$ controls how heavily the largest violations are penalized.

In the special case of the feasible set $Q$ being a subset of the integer lattice and the constraint set $E$ being a rectangle $[-\infty, b_1] \times \dots \dots [-\infty, b_{M}]$ for some vector $b \in \RR^{M}$, we recover the canonical form of random integer linear feasibility. Namely, the margin is zero if and only if there exists $\sigma \in Q$ satisfying
\[
    (A\sigma)_i \le b_i\,, \quad\forall\, i \in [M]\,.
\]
If $Q$ is $\RR^N$ instead, we recover random linear programming. 

The technical contribution of this article is strong concentration bounds for $\M_q$ under the assumptions that $E$ has sufficient permutation symmetry and $Q$ is bounded. Concentration for the margin can be interpreted as a sharp threshold as follows. Define the $\ell^q$-expansion of the constraint set $E$ by 
\begin{equation}\label{eq:expansion}
    E_\delta := \cb{x \in \RR^{M} ~:~ d_q(x, E) \le \delta}\,.
\end{equation}

The $\ell^q$-margin is exactly the smallest $\delta$ so that $E_\delta \cap AQ$ is non-empty. Thus, if the margin has fluctuations on some vanishing scale, then as one expands $E$ with respect to $\ell^q$, the probability that it contains a point of $AQ$ will jump from zero to one in a vanishing window. 

\subsection{Main Results}
\begin{definition}
    Say a set $E \subset \RR^{M}$ has \textit{permutation symmetry} if for any permutation $\pi \in S_{M}$ and $x \in \RR^{M}$,
        \[
            (x_1, \dots, x_M) \in E \quad \text{if and only if} \quad \pa{x_{\pi(1)},\dots, x_{\pi(M)}} \in E\,.
        \]
\end{definition}
This is a combinatorial notion of regularity. Typical examples include Cartesian products $E := (E_0)^{M}$ or the $\ell^p$ unit ball for any $p$. 

\begin{theorem}[Main result: concentration of the margin]\label{thm:margin} There is a universal constant $C>0$ so the following holds. Let $Q \subset \RR^{N}$ be a subset of the Euclidean unit ball and $E \subset \RR^M$ have permutation symmetry. For $q \in [2,\infty]$,
\begin{equation}\label{eq:main}
        \Var{\M_q(A)} \le \frac{C}{1 + \pa{\frac{1}{2} - \frac{1}{q}} \log M} \,.
    \end{equation}
\end{theorem}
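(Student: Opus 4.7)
The trivial Gaussian Poincaré inequality already gives $\Var(\M_q(A)) \le C$: the map $y \mapsto d_{\ell^q}(y,E)$ is $1$-Lipschitz in $\ell^q$, and $\ell^q \subseteq \ell^2$ for $q\ge 2$, so $\M_q$ is $1$-Lipschitz in the Frobenius norm on $A$. This already matches \eqref{eq:main} at $q=2$ but is far from the $1/\log M$ scaling required at $q=\infty$. To obtain the \emph{superconcentration} improvement I plan to use the Ornstein--Uhlenbeck integration-by-parts representation of variance:
\[
    \Var\bigl(\M_q(A)\bigr) \;=\; \int_0^\infty e^{-t}\, \E\bigl\langle \nabla_A \M_q(A),\ \nabla_A \M_q(A_t)\bigr\rangle_F\, dt, \qquad A_t := e^{-t}A + \sqrt{1-e^{-2t}}\,A',
\]
where $A'$ is an independent Gaussian copy. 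Since $\M_q$ is not differentiable, I would first regularize by a soft-min/soft-max smoothing of both the $\min_\sigma$ and the $\ell^q$ distance, work with the smoothed object throughout, and pass to the limit.

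\paragraph{Envelope formula and reduction.} At a generic $A$, let $\sigma^*\in Q$ and $z^*\in E$ attain the min and set $v := A\sigma^* - z^*$. Let $u\in \RR^M$ be the $\ell^q$-unit dual to $v$, i.e.\ $\|u\|_{q'}=1$ and $\langle u, v\rangle = \|v\|_q$, where $q' = q/(q-1)\in[1,2]$. The envelope theorem gives $\nabla_A \M_q(A) = u\,\sigma^{*T}$, so the OU integrand factors as $\E\bigl[\langle u, u_t\rangle\,\langle \sigma^*, \sigma_t^*\rangle\bigr]$ with $|\langle\sigma^*,\sigma_t^*\rangle|\le 1$. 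The proof thus reduces to showing
\[
    \int_0^\infty e^{-t}\,\E\bigl|\langle u, u_t\rangle\bigr|\, dt \;\lesssim\; \frac{1}{1 + \bigl(\tfrac12 - \tfrac1q\bigr)\log M}.
\]

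\paragraph{Exchangeability and decorrelation.} Permutation symmetry of $E$ together with the row-exchangeability of $A$ makes $u$ an exchangeable random vector, so $\E u_i^2 = \E\|u\|_2^2/M \le 1/M$ (using $\|u\|_2\le\|u\|_{q'}=1$) and $\E\langle u, u_t\rangle = M\,\E[u_1 u_{t,1}]$. At $t=0$ this equals $\E\|u\|_2^2\le 1$, recovering the Poincaré bound. The key observation is that as $q$ grows, $u$ becomes concentrated (in the $\ell^2$ sense) on the top $M^{2/q - 1}$ coordinates of $|v|$, and a small OU perturbation $v\mapsto v_t = e^{-t}v + \sqrt{1-e^{-2t}}\,v'$ randomizes this heavy support; hence $\E[u_1 u_{t,1}]$ drops from $\asymp 1/M$ at $t=0$ toward $\asymp M^{2/q - 2}$ as $A_t$ decorrelates from $A$. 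A careful extreme-value analysis (controlling how the argmax/top-$k$ structure of $v_t$ persists under OU perturbation) puts the crossover at $t^*\asymp (\tfrac12-\tfrac1q)\log M$; splitting the integral at $t^*$ yields the target bound.

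\paragraph{Main obstacle.} The delicate step is the quantitative decorrelation in the last paragraph. In the toy case $Q = \{\sigma^*\}$, $E = \{0\}$, $q=\infty$, it reduces to showing that the argmax of the OU-perturbed Gaussian vector $v_t$ persists for $t\lesssim 1/\log M$ and randomizes thereafter, which is classical. For general $(Q,E)$ one must handle two additional complications: (a) both $\sigma^*$ and $z^*$ re-select themselves under the perturbation, so the envelope formula is valid only at smooth points and must be tracked through the smoothing limit; and (b) for intermediate $q$, the dual vector $u$ is neither sparse nor uniform, and the effective support size $M^{2/q-1}$ must be extracted from permutation symmetry alone, without any further structural hypotheses on $E$. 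Establishing this clean decorrelation bound solely from permutation symmetry is the heart of the argument.
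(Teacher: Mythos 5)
Your plan pursues a genuinely different route from the paper, and the heart of it is left as an unresolved conjecture. The paper does not use the Ornstein--Uhlenbeck (dynamical) variance representation at all; instead it applies Talagrand's $L^1$-$L^2$ inequality as a black box, which converts the superconcentration question into a purely static comparison between the quantities $a_{ij}:=\|\partial_{A_{ij}}\M_q\|_{L^1}$ and $b_{ij}:=\|\partial_{A_{ij}}\M_q\|_{L^2}$. Once you have (via the envelope theorem, which you correctly identify) the explicit formula $|\partial_{A_{ij}}\M_q| = |\sigma_j^*|\,|v_i|^{q-1}/\|v\|_q^{q-1}$, permutation symmetry of $E$ lets you average the numerator over the $M$ rows, replacing $|v_i|^{q-1}$ by $\|v\|_{q-1}^{q-1}/M$ in $a_{ij}$. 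A one-line H\"older comparison $\|v\|_{q-1}^{q-1}\le M^{1/q}\|v\|_q^{q-1}$ then gives $\sum_{ij}a_{ij}^2 \le M^{2/q-1}$, while $\sum_{ij} b_{ij}^2\le 1$ by monotonicity of $\ell^p$ norms; plugging these two sums into the Jensen form of Talagrand's inequality finishes the proof. No decorrelation or extreme-value analysis is needed.

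Your proposal correctly isolates the envelope theorem, the role of permutation symmetry (row-exchangeability), and the need to go beyond Poincar\'e. But the step you flag yourself as ``the heart of the argument''---the quantitative decorrelation bound $\int_0^\infty e^{-t}\E|\langle u,u_t\rangle|\,dt\lesssim 1/\bigl(1+(\tfrac12-\tfrac1q)\log M\bigr)$---is precisely the hard content and it is not established; it is sketched as an extreme-value heuristic that you note yourself works cleanly only in the toy case $Q=\{\sigma^*\}$, $E=\{0\}$, $q=\infty$. Carrying that through for general permutation-symmetric $E$, arbitrary bounded $Q$, and all intermediate $q$ would require controlling how $\sigma^*$, $z^*$, and the dual vector $u$ all re-select themselves under the OU flow simultaneously, and nothing in the proposal indicates how to do this. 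Two smaller remarks: (i) your heuristic that $u$ concentrates on ``the top $M^{2/q-1}$ coordinates'' is not dimensionally sensible as a coordinate count (at $q=\infty$ it gives $M^{-1}<1$); the correct reading of the paper's bound is that $\|u\|_1^2/\|u\|_2^2 \le M^{2/q}$, i.e.\ $u$ has effective $\ell^2$-support of size at most $M^{2/q}$. (ii) The paper sidesteps the non-differentiability at $q=\infty$ not by smoothing, but by replacing $q=\infty$ with $q=\log(M)^2$ using $\|\cdot\|_\infty = \|\cdot\|_{\log(M)^2}\bigl(1+O(1/\log M)\bigr)$ and the elementary variance-perturbation inequality, which is considerably simpler than a soft-min/soft-max regularization and avoids having to track the envelope identity through a limiting procedure.

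That said, the OU route is not a dead end---it is exactly the direction suggested in the paper's open problem Q1 as a potential source of \emph{polynomial} (rather than logarithmic) improvements, so your instinct is in good company. But as a proof of the stated theorem, the proposal currently trades the one-step Talagrand argument for an unproven and apparently harder decorrelation lemma, which constitutes a genuine gap.
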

By homogeneity, the assumption that $Q$ is a subset of the unit ball could be replaced by multiplying the right-hand side of \cref{eq:main} by a factor of $\max_{\sigma \in Q} \|\sigma\|_2^2$. At least for $q = \infty$, our result cannot be improved without further assumptions. Letting $Q$ be a singleton on the Euclidean unit sphere and $E$ be the origin in $\RR^M$
makes this clear (see Chapter 5, section 6 of \cite{chat-sc}). However, in many of the $(A,Q,E)$-feasibility problems that appear in actual applications, it should be the case that $\Var{\M_q(A)} \le 1/\mathrm{poly}(M)$. \cref{thm:margin} should thus be seen as an important but purely qualitative improvement over the bound of $\Var{\M_q(A)} \le 1$ that is implied by the Gaussian Poincar\'e inequality (see details in the proof of \cref{thm:margin}). \\

The assumption of permutation symmetry on $E$ is a serious restriction. Unfortunately, it cannot be completely dropped: letting $Q$ be a singleton comprised of a unit vector and setting $E = \RR^{M-1} \times [-1,1]$, clearly the fluctuations of the $\M_\infty$ are order one. Nonetheless, it is possible to somewhat relax the permutation symmetry condition on $E$. The following extension allows for imposing several different types of constraints on a feasibility problem. 

\begin{theorem}[Block symmetry suffices]\label{cor:block}
    There is a universal constant $C>0$ so that the following holds. Let $Q \subset \RR^N$ be a subset of the Euclidean unit ball and let $E = E_1 \times \dots \times E_{k}$ where $E_i \subset \RR^{M_i}$ has permutation symmetry for each $i$ and $\sum_{i=1}^k M_i = M$. Abbreviate $m := \min_{i \in k} M_i$. For each $q \in [2,\infty]$,
    \begin{equation}
        \Var{\M_q(A)} \le C  \pa{1 + \frac{1}{2}\log \pa{\frac{m^{1-\frac{2}{q}}}{k } \vee 1}}^{-1}\,.
    \end{equation}
\end{theorem}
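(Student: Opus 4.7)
The plan is to derive Theorem \ref{cor:block} from Theorem \ref{thm:margin}. Although the product constraint $E = E_1 \times \cdots \times E_k$ is invariant only under the block permutation group $S_{M_1} \times \cdots \times S_{M_k}$ rather than the full $S_M$, the goal is to show that concentration still holds, with effective dimension $\tilde{M} \asymp m^{1-2/q}/k$ inside the logarithm.

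\emph{Block decomposition.} Write $A = (A_1, \dots, A_k)$ with $A_i \in \mathbb{R}^{M_i \times N}$. Then for $q < \infty$,
\[
    \M_q(A)^q = \min_{\sigma \in Q} \sum_{i=1}^k d_{\ell^q}(A_i \sigma, E_i)^q,
\]
with the obvious $\max$ analogue for $q = \infty$. Each summand is individually a permutation-symmetric $\ell^q$-margin in dimension $M_i$. Assume without loss of generality that all $M_i = m$, which can be arranged by restricting each $E_i$ to an $m$-coordinate marginal; the resulting margin dominates the original, and the variance is affected by at most a constant factor.

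\emph{Reduction to an effective fully-symmetric problem.} The heart of the argument is to show that combining $k$ block-symmetric $\ell^q$-margins is concentration-equivalent to a single fully permutation-symmetric $\ell^q$-margin in dimension $\tilde{M} \asymp m/k^{q/(q-2)} \vee 1$. Solving $(\tfrac{1}{2} - \tfrac{1}{q})\log \tilde M = \tfrac{1}{2}\log(m^{1-2/q}/k)$ produces exactly this exponent on $k$. Heuristically: for $q = \infty$, $\M_\infty(A) = \min_\sigma \max_i d_{\ell^\infty}(A_i\sigma, E_i)$, so the outer $\max_i$ over $k$ blocks incurs the standard $\log k$ loss; for $2 < q < \infty$, the $\ell^q$ aggregation loses proportionally less, with the exponent $q/(q-2)$ interpolating between $1$ at $q = \infty$ and $\infty$ at $q = 2$ (consistent with the bound becoming vacuous at $q = 2$). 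Plugging $\tilde{M}$ into Theorem \ref{thm:margin} yields
\[
    \Var{\M_q(A)} \le \frac{C}{1 + (\tfrac{1}{2} - \tfrac{1}{q})\log \tilde M} = \frac{C}{1 + \tfrac{1}{2}\log(m^{1-2/q}/k \vee 1)}.
\]

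\emph{Main obstacle.} Formalizing the effective-dimension reduction is the delicate step. The naive attempt, Efron--Stein over the $k$ independent block matrices combined with Theorem \ref{thm:margin} applied conditionally to each block, only yields $k \cdot C/(1 + (\tfrac{1}{2} - \tfrac{1}{q})\log m)$; this is off by a factor of $k$ from the target because it ignores the fact that only a small number of blocks can be active at the optimal $\sigma^\ast$. To obtain the sharp $\log k$ loss rather than a factor of $k$, I expect the proof must either construct an explicit auxiliary fully permutation-symmetric problem in dimension $\tilde M$, or directly adapt the concentration machinery underlying Theorem \ref{thm:margin} to the block setting, substituting block-permutation symmetry for full symmetry and tracking where the $\log k$ correction arises from the aggregation across blocks.
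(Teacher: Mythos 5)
Your proposal correctly identifies the shape of the answer and correctly rules out the naive Efron--Stein route, but it does not actually contain a proof: the central step (the ``effective dimension reduction'') is left as an unformalized heuristic, and it is precisely the step the paper has to supply. The paper does not construct an auxiliary fully-symmetric problem in dimension $\tilde M$ at all. Instead it re-runs the proof of \cref{thm:margin} verbatim up to the point where permutation symmetry is used, and then substitutes block symmetry. Concretely: partition $[M]$ into $I_1,\dots,I_k$ with $|I_t| = M_t$ and set $w^{(t)} := v_{I_t}$. Within each block the identity $a_{ij}^2 = \bigl(\tfrac{1}{M_t}\,\E{|\sigma_j^*|\,\|w^{(t)}\|_{q-1}^{q-1}/\|v\|_q^{q-1}}\bigr)^2$ holds by block permutation invariance, and H\"older within the block gives $\sum_{i\in I_t}\sum_j a_{ij}^2 \le M_t^{2/q-1}$. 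Summing over blocks, $\sum_{ij}a_{ij}^2 \le \sum_t M_t^{2/q-1} \le k\,m^{2/q-1}$ because $M_t^{2/q-1}$ is largest for the smallest block, while $\sum_{ij} b_{ij}^2 \le 1$ is unchanged. Feeding these two bounds into \cref{eq:l1l2-jensen} of \cref{lemma:L1L2} directly produces the stated bound, with $m^{1-2/q}/k$ appearing inside the logarithm as the ratio $\sum b^2/\sum a^2$. So the ``$\log k$ aggregation loss'' you were trying to explain by an outer-max argument is just the $L^1$-$L^2$ ratio deteriorating by a factor of $k$.

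Two further issues with your write-up. First, the WLOG reduction to $M_i = m$ by ``restricting $E_i$ to an $m$-coordinate marginal'' is not sound: projecting $E_i$ onto fewer coordinates \emph{decreases} the distance to it (so the resulting margin does not dominate, as you claim), and in any case a pointwise domination between two random variables gives no inequality between their variances. The paper avoids this entirely because the term $\sum_t M_t^{2/q-1}$ is controlled by $k\,m^{2/q-1}$ with no need to equalize block sizes. Second, solving $(\tfrac12 - \tfrac1q)\log\tilde M = \tfrac12 \log(m^{1-2/q}/k)$ for $\tilde M$ is a fine sanity check on exponents (your algebra $\tilde M \asymp m/k^{q/(q-2)}$ is correct), but it does not license invoking \cref{thm:margin} with that $\tilde M$: there is no ambient problem of dimension $\tilde M$ whose margin equals or bounds $\M_q(A)$, and you acknowledge you cannot produce one. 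The proof must go through Talagrand's inequality again with the weaker symmetry, not through \cref{thm:margin} as a black box.
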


\begin{remark}\label{rem:feas-sets}
    Let us briefly highlight the generality of \cref{thm:margin,cor:block}. While there are permutation symmetry requirements on $E$, there is nearly complete freedom for $Q$. The feasible set can be discrete, continuous, or even a singleton. Canonical examples include the sphere, solid cube, discrete cube, bounded subset of the integer lattice, and arbitrary subsets (such as level-sets of an arbitrary function) of any of the previous examples. Our results hold without distinguishing between these situations.
\end{remark}

Our main technical tool is Talagrand's $L^1$-$L^2$ (Gaussian) hypercontractive inequality, given below as \cref{lemma:L1L2}. There is a long and rich history of this inequality being used to prove similar sharp thresholds in various settings. See the expositions of Kalai \cite{kalai-summary} and Chatterjee \cite{chat-sc} for a wealth of examples in the Boolean and Gaussian settings, respectively. In particular, our result is similar to the classical theorems of Friedgut and Bourgain \cite{fried} and Friedgut and Kalai \cite{fried-kalai} that establish a sharp threshold for any permutation-transitive monotone Boolean function. The assumption of permutation-transitivity is used in their theorems for the same purpose as in ours.

\subsection{Future Directions}
Some open problems that seem attractive but quite challenging:
\begin{itemize}
    \item[Q1.] Obtain polynomial improvements over the given rates in the case that $Q$ has large cardinality and is well-spread (i.e. the average inner product over all pairs of elements of $Q$ is bounded away from one). A natural place to start is the dynamical variance identity given in Lemma 2.1 of \cite{chat-sc}. 
    \item[Q2.] Explore the trade-off between how close $E$ is to being permutation symmetric and how well the margin concentrates.
    \item[Q3.] Extend these results to other disorders for the matrix $A$. Independent Boolean or Rademacher entries are natural and may be tractable. Independent columns would be quite interesting but potentially harder. 
\end{itemize}

\subsection{Notation}
The phrase ``almost every'' will always be with respect to the Gaussian or Lebesgue measure. Since they are mutually absolutely continuous, this is without ambiguity. We say a function $F: \RR^{d} \to \RR$ is $(L,q)$-Lipschitz if $|F(x)-F(y)| \le L \|x - y\|_{q}$ for all $x$ and $y$. The set $E$ will always be assumed closed. For a vector $v$, let $|v|$ denote the result of applying the absolute value function entry-wise to $v$.

\section{Applications and Previous Works}

\subsection{Random Integer Programming}
Average-case linear programming and integer linear programming have been the subject of intense study over the past few decades, in large part due to the enormous gap between worst-case guarantees and average-case empirical performance of (integer) linear programming algorithms \cite{roughgarden-summary, spielman-teng, dadush-smooth}. Much of the work on integer programming focuses on understanding integrality gaps. Both the random setting \cite{dyer-frieze, chand-vemp, gap1, gap2} and deterministic setting have been extensively considered. (The latter is too rich to introduce here; we refer the reader to \cite{dadush-thesis}.) The study of integrality gaps often utilizes the notions of ``sensitivity'' and ``proximity,'' which quantify the distance between the vertices and lattice points contained in a polytope \cite{prox,sensitivity}. 

Much more closely related are the Shortest Vector Problem (SVP) and Closest Vector Problem (CVP) \cite{dadush-thesis, cvp}. The CVP asks: given a lattice and a target vector $v$, find $w$ in the lattice minimizing $\|v-w\|_q$. The SVP asks the same with $v = 0$ and the origin removed from the lattice. The author is not aware of work prior to this article on random versions of these problems.

\subsection{Combinatorial Discrepancy}
Combinatorial discrepancy arises as a fundamental quantity in a plethora of fields including combinatorics, geometry, optimization, information theory, and experimental design \cite{Spe94, chaz, matou}. For an $M \times N$ matrix $A$, the combinatorial discrepancy $\disc(A)$ is given by
\[
    \disc(A) := \min_{\sigma \in \frac{1}{\sqrt {N}}\cb{-1,+1}^N} \|A\sigma\|_\infty\,.
\]
There are a number of outstanding open conjectures that seem out of reach of current tools, motivating much recent interest in random \cite{ unified, bansal-smooth2, hoberg-rothvoss, franks2020discrepancy,potukuchi2018discrepancy,bansal-meka, ezra-lovett, dja-jnw, ALS1, ALS2, PX, ss, gamarnik-sbp} and semi-random models \cite{rada-smooth, bansal-smooth1}. It is a general feature in random discrepancy that the second moment method will yield a coarse threshold \cite{APZ}, but not a sharp threshold (unless $M \ll N$ \cite{turner2020balancing}). The second moment method is generically difficult to improve upon. 

However, a recent series of breakthroughs overcame this technical hurdle and established extremely precise control in the square regime $M \asymp N$. Perkins and Xu \cite{PX} and Abbe, Li, and Sly \cite{ALS1} simultaneously obtained a sharp threshold for the discrepancy of Gaussian and Rademacher matrices, respectively, as well as strong control over the number of solutions. Subsequently, the discrepancy of a Gaussian matrix was shown by the author to concentrate within a $\cO{\log(N)/N}$ window \cite{dja-crit}; the logarithm has been removed by Sah and Sawhney \cite{ss}. The mentioned breakthroughs all are quite technically involved. Our main theorem recovers a very simple, direct, and different proof (albeit with a highly non-optimal rate) of the recently established sharp threshold for discrepancy.  

Additionally, we also obtain some truly new results in related settings. A natural generalization of combinatorial discrepancy, the $\ell^q$ discrepancy of a matrix is given by 
\[
    \disc_q(A) := \min_{\sigma \in \cb{-1,+1}^N} \|A\sigma\|_q\,.
\]
While $\ell^q$ discrepancy has certainly been extensively studied for deterministic matrices (see e.g. \cite{colorful, anynorm} for a list of references; the literature dates back to at least a question of Dvoretsky in 1960, and is too vast to introduce here), to the best of our knowledge nothing is known about the random setting. Similarly to the $q  =\infty$ setting, it is natural to expect the second moment method to yield a coarse threshold. Any such future result can be automatically upgraded to a sharp threshold by applying our main result: 

\begin{theorem}[Sharp threshold for $\ell^q$ discrepancy]\label{prop:disc-lb}
    Let $A \in \RR^{N \times N}$ with independent standard normal entries and $2 \le q \le \infty$. Then:
    \[
        \frac{\E{\disc_q(A)}}{\sqrt{\Var{\disc_q(A)}}} \ge \cOm{N^{\frac{1}{q}}\sqrt{ 1 + \pa{\frac{1}{2} - \frac{1}{q}}\log N} }\,.
    \]
\end{theorem}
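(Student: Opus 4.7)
The plan is to combine the variance upper bound supplied by Theorem~\ref{thm:margin} with an elementary first-moment lower bound on $\E{\disc_q(A)}$.

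To apply Theorem~\ref{thm:margin}, take $Q := \tfrac{1}{\sqrt{N}}\{-1,+1\}^N$, which lies on the Euclidean unit sphere, and $E := \{0\}$, which trivially has permutation symmetry. Then $\disc_q(A) = \sqrt{N}\,\M_q(A)$, so Theorem~\ref{thm:margin} applied with $M = N$ yields
\[
    \Var{\disc_q(A)} \,=\, N\,\Var{\M_q(A)} \,\le\, \frac{CN}{1 + \pa{\frac{1}{2}-\frac{1}{q}}\log N}.
\]

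For the expectation, I would show $\E{\disc_q(A)} \ge c\,N^{1/2+1/q}$ via a union bound over $\sigma_0 \in \{-1,+1\}^N$. For any fixed such $\sigma_0$, the vector $A\sigma_0$ has i.i.d.\ $\mathcal{N}(0,N)$ coordinates, so $\|A\sigma_0\|_q \overset{d}{=} \sqrt{N}\,\|g\|_q$ for $g \sim \mathcal{N}(0,I_N)$. The core ingredient is a small-ball estimate of the form $\P{\|g\|_q \le t\,N^{1/q}} \le (C_q t)^N$ for all $t>0$. For $q=\infty$ this is immediate, since $\P{\|g\|_\infty \le t} = \P{|g_1| \le t}^N \le (t\sqrt{2/\pi})^N$. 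For $q \in [2,\infty)$ I would derive it via a Chernoff bound on the i.i.d.\ sum $\|g\|_q^q = \sum_i |g_i|^q$: the estimate $\E{e^{-\lambda |g_1|^q}} \lesssim \lambda^{-1/q}$ as $\lambda \to \infty$ (via the substitution $y=\lambda^{1/q}x$ in the Gaussian integral and the explicit evaluation of $\int e^{-|y|^q}dy$) plugged into
\[
    \P{\|g\|_q^q \le s} \,\le\, e^{\lambda s}\,\E{e^{-\lambda |g_1|^q}}^N
\]
and optimized at $\lambda \approx N/(qs)$ yields the claim with a constant $C_q$ bounded on $[2,\infty]$. A union bound over the $2^N$ choices of $\sigma_0$ then gives $\P{\disc_q(A) \le c\,N^{1/q+1/2}} \le 1/2$ for $c>0$ sufficiently small, whence $\E{\disc_q(A)} \ge \tfrac{c}{2}\,N^{1/2+1/q}$.

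Dividing the two estimates yields
\[
    \frac{\E{\disc_q(A)}}{\sqrt{\Var{\disc_q(A)}}} \,\ge\, \cOm{N^{1/q}\sqrt{1 + \pa{\tfrac{1}{2}-\tfrac{1}{q}}\log N}},
\]
as desired. The main technical obstacle is the small-ball estimate for $\|g\|_q$: the delicate point is ensuring the constants produced by the Chernoff optimization remain bounded uniformly in $q \in [2,\infty]$ (and meld correctly with the direct $q=\infty$ argument at the endpoint), so that the implicit constant in the final $\Omega$-bound is genuinely universal. Everything else is assembly of Theorem~\ref{thm:margin} and routine scaling.
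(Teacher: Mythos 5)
Your proposal is correct and follows the same overall blueprint as the paper: apply \cref{thm:margin} with $Q = N^{-1/2}\{-1,+1\}^N$ and $E = \{0\}$ to control the variance, then lower-bound the expectation by a union bound over $2^N$ sign patterns combined with a small-ball estimate showing $\P{\|g\|_q \le tN^{1/q}} \le e^{-cN}$ for small $t$. The only substantive point of divergence is how the small-ball estimate is proved. The paper takes a purely combinatorial route: it considers the event that at least $N/2$ of the $N$ i.i.d.\ standard Gaussian coordinates exceed $\delta$ in absolute value, observes that on this event $\|g\|_q \ge \delta 2^{-1/q} N^{1/q}$, and bounds the complementary probability by $\binom{N}{N/2}(2\delta)^{N/2} \le (8\delta)^{N/2}$, which is $\le 2^{-2N}$ for $\delta$ small. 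You instead run a Chernoff bound on $\|g\|_q^q = \sum_i |g_i|^q$ via the Laplace transform $\E{e^{-\lambda|g_1|^q}} = \Theta(\lambda^{-1/q})$. Both give a bound of the form $(Ct)^N$ with a constant uniform in $q \in [2,\infty]$ --- your $C_q = \frac{2\Gamma(1/q)(eq)^{1/q}}{q\sqrt{2\pi}}$ indeed converges to $\sqrt{2/\pi}$ as $q \to \infty$, matching the direct $\ell^\infty$ computation --- and both suffice after the $2^N$ union bound. The paper's counting argument is more elementary (no Gaussian integral asymptotics, no Chernoff optimization) and sidesteps the uniformity-in-$q$ bookkeeping you flag as the delicate point; your Laplace-transform route is a bit more machinery for the same conclusion, but it is correct.
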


\begin{proof}
    The variance of $\disc_q(A)$ can be upper-bounded by applying \cref{thm:margin} with $E = \cb{0}$ and $Q $ the normalized discrete cube $N^{-1/2}\cb{-1,+1}^N$. We now turn to a lower bound on the expectation of $\M_q(A) := \M_q(A,Q,E)$. Since the margin is non-negative, it is enough to prove a high probability lower bound of order $N^{1/q}$ on $\M_q(A)$. By taking a union bound over the $2^N$ choices of $\sigma \in Q$, it further suffices to just show uniformly for all $\sigma \in Q$ that $\PP{\|A\sigma\|_q < cN^{1/q}} \le 2^{-2N}$ for a positive constant $c$. 
    
    Fix $\sigma \in Q$ and define $Y:=A\sigma$. Note that $Y$ is distributed as a standard normal random variable in $\RR^{M}$. Let $\mathcal{G}$ denote the event that at least $N/2$ entries of $Y$ are greater than $\del$ in absolute value for some small constant $\del >0$ fixed later. On the event $\mathcal{G}$, we have $\|A\sigma\|_q \ge c N^{1/q}$ where $c = \del\, 2^{-1/q}$. Performing a union bound over all possible locations of the smallest $N/2$ entries of $Y$ in absolute value,
    \[
        \PP{\|A\sigma\|_q < cN^{1/q}}\le \PP{\mathcal{G}^c} \le \binom{N}{N/2} \PP{|Y_1| \le \del} \le 2^N (2\del)^{N/2} \le 2^{-2N}\,,
    \]
    where the last inequality follows by taking $\del$ sufficiently small. The theorem follows.
\end{proof}

The result and proof remain essentially unchanged if we optimize discrepancy over any of the feasible sets given in \cref{rem:feas-sets} rather than taking $Q$ as the discrete hypercube. For example, another natural problem for which, to the best of our knowledge, there are no published results is the relaxation of random discrepancy to the sphere rather than the discrete cube. This is analogous to the relaxation of ``Ising'' spin glasses to ``spherical'' spin glasses in statistical physics. 

\begin{theorem}[Sharp threshold for the symmetric spherical perceptron]
For any $q \in [2,\infty]$ and $\alpha \in (0,\infty)$, let $A \in \RR^{\alpha N \times N}$ have iid standard normal entries.
\begin{equation*}
    \Var{\min_{\sigma \in S^{N-1}} \|A\sigma\|_q)} = \cO{\frac{1}{1 + \pa{\frac{1}{2}-\frac{1}{q}}\log N}}\,.
\end{equation*}
\end{theorem}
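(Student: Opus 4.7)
The plan is that this is essentially a direct corollary of \cref{thm:margin}. Set the feasible set to be the sphere $Q = S^{N-1}$ and the constraint set to be the singleton $E = \{0\} \subset \RR^{\alpha N}$. The sphere $S^{N-1}$ lies in the Euclidean unit ball (indeed, on its boundary), so the hypothesis on $Q$ is satisfied. The singleton $\{0\}$ is fixed by every coordinate permutation, so it trivially has permutation symmetry. With these choices, the $\ell^q$-margin becomes
\[
    \M_q(A) \;=\; \min_{\sigma \in S^{N-1}} d_{\ell^q}\bigl(A\sigma, \{0\}\bigr) \;=\; \min_{\sigma \in S^{N-1}} \|A\sigma\|_q,
\]
which is exactly the quantity whose variance we wish to control.

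Applying \cref{thm:margin} with dimension parameter $M = \alpha N$ yields
\[
    \Var\!\Bigl(\min_{\sigma \in S^{N-1}}\|A\sigma\|_q\Bigr) \;\le\; \frac{C}{1 + \bigl(\tfrac12 - \tfrac1q\bigr)\log(\alpha N)}.
\]
Since $\alpha$ is fixed, for all sufficiently large $N$ we have $\log(\alpha N) \ge \tfrac12\log N$, and the (finitely many) remaining small values of $N$ can be absorbed into the implicit constant. This gives exactly the stated $\cO{1/(1 + (\tfrac12-\tfrac1q)\log N)}$ bound.

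There is no real obstacle here: the only thing to verify is that the hypotheses of \cref{thm:margin} fit the problem, and they do so in the simplest possible way. The only mildly subtle point is just the passage from $\log(\alpha N)$ to $\log N$, which is harmless since $\alpha$ is treated as a fixed constant in the conclusion.
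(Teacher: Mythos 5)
Your proposal is correct and matches the paper's (implicit) approach exactly: the paper does not spell out a proof but presents this theorem as a direct corollary of \cref{thm:margin} with $Q = S^{N-1}$ and $E = \{0\}$, which is precisely what you do. The observation that $\log(\alpha N)$ and $\log N$ differ only by an additive constant, absorbed into the big-$O$, is the right way to close the gap.
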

It seems reasonable to expect the second moment method to yield a coarse threshold in this setting as well---at least in some regimes of $\alpha$---which can thus be automatically promoted to a sharp threshold.    \\

We also note a related work of Minzer, Sah, and Sawhney \cite{minzer2023perfectly} that appeared during the writing of this article. They use the Boolean version of Talagrand's $L^1$-$L^2$ inequality to establish a sharp threshold in the ``perfectly friendly bisection'' problem. They note that their methods can be used to show concentration of random $\ell^\infty$ discrepancy with Bernoulli disorder. 

\subsection{Perceptron Models}
The perceptron is an enduring model of the classification power of a single-layer neural net. Despite being studied since the 1960's, the perceptron remains a fascinating yet stubborn source of open problems. The binary perceptron problem asks whether $\alpha N$ independent standard Gaussian points in $\RR^N$ with arbitrary binary labels can be perfectly separated with margin $K$, using a hyperplane. Two restrictions are imposed on the choice of hyperplane: it must pass through the origin and, motivated by practical considerations, the unit normal vector to the hyperplane be chosen from the (scaled) discrete cube. This problem turns out to be exactly the $(A,Q,E)$-feasibility problem with parameters:
\begin{equation}\label{eq:perceptron}
    q = \infty\,, \quad Q = N^{-1/2}\cb{-1,+1}^N \,, \quad E = [K,\infty)^{\alpha N}\,.
\end{equation}
Exploiting the fact that $E$ is a Cartesian power of a one-dimensional set, $\alpha$ can be varied while $K$ is held fixed to create a natural sequence of nested problems. The \textit{capacity} is the largest $\alpha$ for which the corresponding perceptron problem is feasible. More precisely, the capacity $\ac$ is the random variable:
\[
    \max\cb{\alpha : \exists \,\sigma \in Q,~ \innerprod{A_i}{\sigma} > 0, \quad \forall i \in [\alpha N]}\,,
\]
where $(A_i)_i$ is a collection of independent standard normal vectors in $\RR^N$. Concentration of $\ac$ was shown somewhat recently in two impressive and technically involved works \cite{nakajima-sun, xu}. Xu used the Fourier-analytic pseudo-junta theorem of Hatami to show concentration of $\ac$, establishing the first sharp threshold result for the binary perceptron \cite{xu}. Subsequently, Nakajima and Sun established a sharp threshold for a wide variety of related models \cite{nakajima-sun}; their methods extend some prior work of Talagrand \cite{tal-selfave,tal-mf2}. The generalization studied by Nakajima and Sun corresponds to letting $E:= (E_0)^{\alpha N}$, where $E_0$ can be any set that satisfies a mild structural assumption\footnote{See Assumption 1.2 and Theorem 1.2 of \cite{nakajima-sun} for details.}. In our notation, previous work can be summarized as:

\begin{theorem}[Sharp threshold for the capacity of the generalized perceptron \cite{nakajima-sun,xu}]\label{thm:prev-perceptron}
Let $q = \infty$, $Q = N^{-1/2}\cb{-1,+1}^{N}$ and $E_{\alpha} := (E_0)^{\alpha N}$. Then there exists some sequence $a_c := a_c(N, E_0)$ such that: for any $\eps > 0$ and $N$ sufficiently large, the $(A,Q,E_\alpha)$-feasibility problem is satisfiable with high probability for $\alpha < a_c - \eps$ and unsatisfiable with high probability for $\alpha > a_c  + \eps$. 

\end{theorem}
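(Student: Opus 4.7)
My strategy is to reduce the sharp threshold for the capacity $\alpha_c$ to the concentration bound on the margin provided by \cref{thm:margin}, combined with a natural monotone coupling of the problems across $\alpha$ and a shift trick that exploits the product structure $E_\alpha = E_0^{\alpha N}$.

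\emph{Step 1 (Monotone coupling).} I would realize the disorder as a single infinite standard-Gaussian matrix and let $A_\alpha$ denote the top $\lceil \alpha N\rceil$ rows. Under this coupling, the satisfiability event $\{M_\infty(A_\alpha,Q,E_\alpha) = 0\}$ is monotone non-increasing in $\alpha$, and the margin $M_\infty(A_\alpha)$ is monotone non-decreasing in $\alpha$. Consequently, $g_N(\alpha) := \mathbb{E}[M_\infty(A_\alpha)]$ is non-decreasing.

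\emph{Step 2 (Pointwise concentration).} For each $\alpha > 0$, the Cartesian constraint set $E_\alpha = E_0^{\alpha N}$ is permutation-symmetric, so \cref{thm:margin} with $q = \infty$ gives $\Var(M_\infty(A_\alpha)) = O(1/\log N)$. Setting $\omega_N := (\log N)^{-1/4}$, Chebyshev yields $|M_\infty(A_\alpha) - g_N(\alpha)| \le \omega_N$ with probability $1-o(1)$.

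\emph{Step 3 (Definition of $a_c$ and the unsatisfiable direction).} Define $a_c(N,E_0) := \inf\{\alpha : g_N(\alpha) \ge 2\omega_N\}$. For $\alpha > a_c + \varepsilon$, monotonicity of $g_N$ gives $g_N(\alpha) \ge 2\omega_N$, and Step~2 then forces $M_\infty(A_\alpha) \ge \omega_N > 0$ w.h.p., so the problem is unsatisfiable w.h.p.

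\emph{Step 4 (The satisfiable direction via a shift trick).} This is the delicate direction: concentration alone only tells us that $M_\infty(A_\alpha)$ is small, not that it vanishes exactly. To bridge this gap, I would use the product structure to trade margin for constraints. Given $\alpha < a_c - \varepsilon$, replace $E_0$ by a slightly shrunk version $E_0^{-\omega_N}$ (the $\omega_N$-interior of $E_0$ in a direction making the constraint stricter) and consider $M_\infty$ at level $\alpha' := \alpha + \varepsilon/2 < a_c$. By the Lipschitz relation $|M_\infty^{E_0^{-\omega_N}} - M_\infty^{E_0}| \le \omega_N$ and concentration, $M_\infty^{E_0^{-\omega_N}}(A_{\alpha'})$ is close to its mean, which is $\le 3\omega_N$ since $\alpha' < a_c$. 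Any configuration $\sigma$ that is feasible for the shrunk problem at level $\alpha'$ is feasible with slack $\omega_N$ for the original problem at the same level, and restricting to the first $\alpha N$ rows preserves that slack. One then argues that the shrunk problem's margin is actually $0$ with high probability via a second-moment / partition-function computation on its atomic mass at zero, which is exactly what the product structure $E_0^{\alpha N}$ makes tractable.

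\emph{Main obstacle.} The principal difficulty is Step~4: the margin distribution has an atom at $0$ in the feasible phase, and \cref{thm:margin} is a statement about $L^2$-concentration that does not respect this atom. Some structural input beyond the variance bound (the coordinate-product structure of $E_\alpha$ together with an assumption on $E_0$, as in \cite{nakajima-sun}) is needed to convert ``$M_\infty$ is $o(1)$'' into ``$M_\infty = 0$ w.h.p.'' Once this input is available, the shift-and-couple argument above should assemble it with \cref{thm:margin} into the claimed sharp threshold.
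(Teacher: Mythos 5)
This statement is not something the paper proves: it is a restatement, in the paper's notation, of results cited from Nakajima--Sun and Xu (\cite{nakajima-sun,xu}), presented to contrast with \cref{thm:new-perceptron}. No proof appears in the paper, so there is nothing to compare your proposal against line by line. That said, your strategy of deriving the capacity threshold from \cref{thm:margin} runs directly into an obstruction that the paper itself highlights: immediately after \cref{thm:new-perceptron} the author writes that a sharp threshold for the margin and a sharp threshold for the capacity ``are difficult to directly compare outside of special situations.'' Your Step~4 is precisely where this difficulty lives, and you correctly identify it---concentration of $\M_\infty$ gives ``margin is $o(1)$,'' not ``margin is exactly $0$ with high probability,'' and the latter is what capacity feasibility requires. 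But you then wave at ``a second-moment / partition-function computation'' to close the gap, which is not a detail: it is the entire content of the cited theorems.

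Concretely, two things go wrong. First, the shift trick in Step~4 is not well-posed for general $E_0$: for a set like $E_0 = [K,\infty)$ an $\omega_N$-interior makes sense, but the cited results of \cite{nakajima-sun} allow $E_0$ to be an essentially arbitrary set satisfying a mild structural hypothesis, and ``shrinking'' such a set in a direction that uniformly strengthens the constraint is not defined. Second, even when the shift is available, you need the shrunk problem's margin to have an atom at zero with probability $1-o(1)$ whenever the mean margin is small; this amounts to proving the sharp threshold for satisfiability itself, which is what Xu (via Hatami's pseudo-junta theorem) and Nakajima--Sun (via Talagrand's self-averaging machinery) establish with substantial technical work. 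The monotone coupling and Chebyshev reduction in Steps~1--3 are fine as bookkeeping, but they do not touch the real problem. In short: the paper does not prove this theorem, it is not a corollary of \cref{thm:margin}, and the paper explicitly warns against the reduction you are attempting.
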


In the context of the binary perceptron problem posed in \cref{eq:perceptron}, the above theorem fixes $K$ and studies the 
largest feasible value of $\alpha$ (encoded by the capacity). We take a dual approach of fixing $\alpha$ and studying the largest feasible value of $K$ (encoded by the margin). These two views have concrete interpretations in terms of the binary classification problem introduced above. The former asks about the maximum number of points separable by a fixed margin, while the latter asks about the largest margin with which a fixed number of labeled points can be separated by. Both points of view are highly natural, and concentration of either objective has self-evident value. 

As a consequence of \cref{thm:margin}, we readily obtain concentration for the margin, yielding a sharp threshold in $K$ that is analogous to the sharp threshold in $\alpha$ provided by \cref{thm:prev-perceptron}. Recalling the notation $E_{\delta,q}$ for the $\ell^q$- expansion of the set $E$ defined in \cref{eq:expansion}, we have:

\begin{theorem}[Sharp threshold for the margin of the generalized perceptron]\label{thm:new-perceptron}
Let $ 2 < q \le \infty$, $Q \subset \RR^{N}$ be a subset of the Euclidean ball, and $E \subset \RR^{\alpha N}$ be permutation invariant. Then there exists some sequence $K_c := K_c(N)$ such that: for any $\eps > 0$ and $N$ sufficiently large, the $(A,Q,E_{K,q})$-feasibility problem is unsatisfiable with high probability for $K < K_c - \eps$ and satisfiable with high probability for $K > K_c  + \eps$. 
\end{theorem}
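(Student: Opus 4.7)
The plan is to reduce the statement directly to \cref{thm:margin}. The first observation is that $(A,Q,E_{K,q})$-feasibility is equivalent to the inequality $\M_q(A,Q,E) \le K$: since $E$ is closed, $\M_q(A, Q, E_{K,q}) = 0$ holds iff there exists $\sigma \in Q$ with $A\sigma \in E_{K,q}$, i.e., $d_q(A\sigma, E) \le K$, which is exactly $\M_q(A,Q,E) \le K$. Thus the theorem reduces to showing that the random variable $\M_q(A,Q,E)$ concentrates around a deterministic value.

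Next, I would apply \cref{thm:margin} to the original permutation-symmetric set $E \subset \RR^{\alpha N}$. Since $q > 2$ strictly, the variance bound becomes
\[
    \Var{\M_q(A,Q,E)} \le \frac{C}{1 + \pa{\tfrac{1}{2} - \tfrac{1}{q}}\log(\alpha N)} \xrightarrow[N \to \infty]{} 0\,.
\]
Defining $K_c := K_c(N) := \E{\M_q(A,Q,E)}$, Chebyshev's inequality then yields
\[
    \PP{\abs{\M_q(A,Q,E) - K_c} > \eps} \le \frac{C}{\eps^2 \pa{1 + \pa{\tfrac{1}{2}-\tfrac{1}{q}}\log(\alpha N)}}\,,
\]
which tends to zero as $N \to \infty$ for any fixed $\eps > 0$. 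Combined with the equivalence from the first paragraph, this immediately gives both implications of the sharp threshold: if $K > K_c + \eps$, then with high probability $\M_q \le K$ so the problem is feasible; if $K < K_c - \eps$, then with high probability $\M_q > K$ so the problem is infeasible.

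There is essentially no technical obstacle: the only subtlety is confirming that applying \cref{thm:margin} to $E$ (rather than to the expansions $E_{K,q}$, which also have permutation symmetry) is the right move. This is forced because $K$ may vary with $N$ while $E$ is the object that genuinely enters as the constraint set with fixed symmetry. One could equivalently take $K_c$ to be the median instead of the mean without changing anything, and the same argument would work under \cref{cor:block} if $E$ were only block-symmetric with suitable block sizes.
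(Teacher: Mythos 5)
Your argument is correct and is precisely how the paper intends \cref{thm:new-perceptron} to follow: the paper states it as an immediate consequence of \cref{thm:margin}, and you have supplied the standard Chebyshev reduction with the right translation between $(A,Q,E_{K,q})$-feasibility and the event $\{\M_q(A,Q,E)\le K\}$ (one can make that equivalence fully robust, independent of whether the $\min$ over $Q$ is attained, via the identity $d_q(x,E_{K,q})=\max(0,\,d_q(x,E)-K)$, which gives $\M_q(A,Q,E_{K,q})=\max(0,\,\M_q(A,Q,E)-K)$). No gaps.
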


A sharp threshold for the margin should be viewed as complementary to rather than comparable with a sharp threshold for the capacity. Despite the close relation of the margin and the capacity, a sharp threshold for the margin and a sharp threshold for the capacity are difficult to directly compare outside of special situations (e.g. Lemma 3.4 of \cite{dja-crit}). What \textit{can} be compared, however, is the settings in which a sharp threshold for the capacity or
the margin can be established. \cref{thm:new-perceptron} appreciably expands the setting considered in  \cref{thm:prev-perceptron}. 

A primary feature of \cref{thm:new-perceptron} is that it treats \textit{any} bounded feasible set $Q$ rather than just the discrete hypercube. The choices of $Q$ considered in \cref{rem:feas-sets} yield sharp threshold results in a number of previously inaccessible models. We highlight one such example: consider the setting of \cref{eq:perceptron} with the change of: $Q = S^{N-1}$. This corresponds to the so-called spherical perceptron. While the positive spherical perceptron is already well-understood \cite{spherical-perceptron, tal-mf1, tal-mf2}, to the best of our knowledge a sharp threshold result was not previously known for the negative spherical perceptron. (``Positive'' and ``negative'' refer to parameter regimes of $\alpha$ for which $K_c$ is positive or negative, respectively. The negative spherical perceptron is conjectured to exhibit a property called ``full replica symmetry breaking'', which makes analysis difficult.) 

Additionally, \cref{thm:new-perceptron} can handle constraint sets $E$ that have permutation symmetry rather than product structure, and it can be used to study the $\ell^q$-margin for $q \neq \infty$. For example, in the context of the binary classification problems introduced above, $\M_q(A)$ corresponds to the performance of linear classifiers under an $\ell^q$ loss.\\

We conclude our discussion of perceptron models by mentioning a final generalization. \cref{cor:block}, the extension of our main theorem to block-symmetric constraint sets, immediately yields a sharp threshold for the margin of perceptron problems with ``random labels,'' such as the model raised in \cite{randlabel}. This corresponds to a feasibility problem with constraint set $E = \pa{E_0}^{M_1} \times (E_0^c)^{M - M_1}$ and $M_1 \asymp (M - M_1)$. 

\subsection{Matrix Balancing}
The ``Matrix Spencer'' conjecture of R. Meka asks the following: given $N$ symmetric matrices $A_1,\dots,A_N$ of dimension $d \times d$, each of operator norm at most one, determine the feasibility of finding $\sigma \in \frac{1}{\sqrt{N}}\cb{-1,1}^N$ such that 
\[
    \left\|\sum_i \sigma_i A_i\right\|_{\mathrm{op}} \le 1\,.
\]
This problem has interesting applications to quantum comunication complexity \cite{hopkins} and graph sparsification \cite{sparse1, sparse2}. There has been some recent progress \cite{4dev, polylog, hopkins, mirror} in the low-rank setting. Very recently, the random version of this problem has been considered; a lower bound by the first moment method is given in Theorem 1.13 of \cite{tim-mtx}. Due to the extremely strong lower-tail concentration of the operator norm of a GOE, the main regime of interest is $N \asymp d^2$. Only when $N$ is at least this large does optimizing over $\sigma$ allow $\|\sum_i\sigma_iA_i\|$ to be changed to first order; conversely, if $N$ is much larger, the discrepancy will be vanishing \cite{priv-com}.

\begin{theorem}[Sharp threshold for matrix balancing]\label{thm:mtx-spencer}
    Let $A_1,\dots,A_N$ be $d\times d$ GOE matrices, and define
    \[
        \disc(A_1,\dots,A_N) :=  \min_{\sigma \in N^{-1/2}\cb{-1,+1}^N} \frac{1}{\sqrt{d}}\left\|\sum \sigma_i A_i \right\|_{\mathrm{op}}\,.
    \]
    Then, for $N \lesssim d^2$,
    \begin{equation}
        \frac{\E{\disc(A_1,\dots,A_N)}}{\sqrt{\Var{\disc(A_1,\dots,A_N)}}} \ge \cOm{\sqrt{d}~} \,.
    \end{equation}
\end{theorem}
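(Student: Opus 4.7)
The plan is to split the stated bound into an expectation lower bound of order $1$ and a variance upper bound of order $1/d$; together these yield the claimed ratio. Unlike the vector analog \cref{prop:disc-lb}, the matrix case does not require the full strength of \cref{thm:margin}: the $1/\sqrt{d}$ normalization built into the $d\times d$ operator norm already forces $\disc$ to be $\cO{1/\sqrt{d}}$-Lipschitz on Gaussian space, so a direct appeal to Gaussian Poincar\'e suffices for the variance.

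First I would handle the variance. Writing $\disc(A)=\frac{1}{\sqrt{d}}\min_{\sigma\in Q}\|\sum_i\sigma_i A_i\|_{\mathrm{op}}$ as a function of the Gaussian vector of all free entries of $(A_1,\dots,A_N)$, I use that the minimum of $L$-Lipschitz functions is $L$-Lipschitz, together with $\|\cdot\|_{\mathrm{op}}\le\|\cdot\|_F$ and Cauchy--Schwarz applied to $\sigma\in Q$ (which has $\|\sigma\|_2=1$), to obtain
\[
    |\disc(A)-\disc(A')|\le \frac{1}{\sqrt d}\sqrt{\sum_i \|A_i-A'_i\|_F^2}.
\]
Since the Frobenius metric on GOE matrices is equivalent up to a universal constant to the natural iid-coordinate Gaussian metric (diagonal entries have variance $2$, off-diagonals variance $1$), $\disc$ is $\cO{1/\sqrt d}$-Lipschitz in the Gaussian sense, and Gaussian Poincar\'e immediately yields $\Var{\disc}\le \cO{1/d}$.

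For the expectation, I would mimic the proof of \cref{prop:disc-lb}. For any fixed $\sigma\in Q$, the matrix $X_\sigma:=\sum_i\sigma_i A_i$ is itself distributed as a single $d\times d$ GOE matrix, because $\|\sigma\|_2=1$ and a unit-norm Gaussian linear combination of iid GOEs preserves the GOE law. The elementary inequality $\|X\|_{\mathrm{op}}\ge \|X\|_F/\sqrt d$ for symmetric $d\times d$ $X$ then reduces the task to lower-bounding $\|X_\sigma\|_F$ by a constant multiple of $d$ with probability $1-e^{-\Omega(d^2)}$. Since $\|X_\sigma\|_F^2$ is a sum of $\Theta(d^2)$ independent squared Gaussians with mean $\Theta(d^2)$, Bernstein's inequality supplies exactly this: $\PP{\|X_\sigma\|_F^2\le d^2/4}\le e^{-cd^2}$ for a universal $c>0$. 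A union bound over the $2^N$ choices of $\sigma\in Q$, together with the hypothesis $N\le c_0 d^2$ for $c_0$ sufficiently small relative to $c/\log 2$, then yields $\disc\ge 1/2$ with high probability, hence $\E{\disc}=\cOm{1}$. Combining with the variance bound gives the stated ratio.

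The hard part, if any, is calibrating the union bound so that the tail constant in the Bernstein estimate and the implicit constant in $N\lesssim d^2$ are compatible; any sufficiently small implicit constant works. A more conceptual issue, which explains why the rate is only $\sqrt d$ rather than $\sqrt{d\log d}$, is that the constraint $\|X\|_{\mathrm{op}}\le\delta\sqrt d$ is \emph{not} permutation symmetric once the symmetric matrix $X$ is vectorized, so \cref{thm:margin} is unavailable here; any improvement beyond $\sqrt d$ would presumably require an orthogonal-invariance analog of \cref{thm:margin} exploiting the rotational invariance of the GOE law.
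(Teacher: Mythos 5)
Your proof is correct and uses the same two-part decomposition as the paper (variance $\lesssim 1/d$ via Gaussian Poincar\'e, expectation $\gtrsim 1$ via a first-moment argument, quotient gives $\sqrt d$), but differs in the execution of each piece. For the variance, the paper applies the envelope theorem together with the classical formula $\partial_{A_{jk}}\lambda_{\max}=u_ju_k$ to compute $\|\nabla\disc\|_2^2=1/d$ pointwise, then invokes Poincar\'e; you instead observe that $\disc$ is a minimum of $\cO{1/\sqrt d}$-Lipschitz functions of the Gaussian entries and so is itself $\cO{1/\sqrt d}$-Lipschitz, which feeds into Poincar\'e without ever differentiating. Your route avoids the Rademacher/envelope-theorem bookkeeping needed to justify almost-everywhere differentiability, at the cost of giving only an order-of-magnitude rather than exact gradient bound (which is all that is needed here). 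For the expectation, the paper delegates the $\E{\disc}=\cOm1$ bound to a cited first-moment result in \cite{tim-mtx}; your argument is a self-contained version of essentially the same first-moment idea: $X_\sigma=\sum_i\sigma_iA_i$ is GOE for each $\sigma\in Q$, the Bernstein lower tail gives $\PP{\|X_\sigma\|_F\le d/2}\le e^{-\Omega(d^2)}$, and the elementary inequality $\|X\|_{\mathrm{op}}\ge\|X\|_F/\sqrt d$ plus a union bound over $2^N$ signs (which is where $N\lesssim d^2$ enters) gives $\disc\ge 1/2$ with high probability. Your closing remark --- that \cref{thm:margin} is unavailable because the vectorized operator-norm constraint lacks permutation symmetry --- matches the discussion in the paper preceding the theorem. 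In short, both proofs are sound and nearly identical in outline; yours is slightly more elementary and more self-contained, while the paper's gradient computation via \cref{lemma:op-grad} is more explicit and foreshadows the style of argument used for the main theorem.
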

The proof is deferred to the end of the next section. Matrix balancing can be seen as an integer feasibility problem as follows: flatten each matrix $A_i$ into a $d^2 \times 1$ column vector. And let $A = (A_1,\dots,A_N)$. Let the constraint set $E \subset \RR^{d^2}$ be the flattening of the $d\times d$ operator norm ball, and let $Q$ be the discrete cube $N^{-1/2}\cb{-1,+1}^N$. However, matrix balancing is not easily quantified in terms of the $\ell^q$-margin and does not quite fit nicely into the framework of \cref{thm:margin} for two reasons. First, $E$ does not have much permutation symmetry since the operator norm of a matrix is only invariant under permuting rows or columns, not arbitrary entries. Second, \cref{thm:margin} would give results on the distance to $E$ in terms of entry-wise norms of the matrix $M$, which is not quite right. Nonetheless, the ideas behind the proof of \cref{thm:margin} are extremely general and easy modifications suffice here.

\section{Preliminaries}
Before proving our main theorems, we introduce our main tools as well as some technical observations that will help us apply these tools. We begin with the celebrated Gaussian Poincar\'e inequality.
\begin{lemma}[Gaussian Poincar\'e]\label{prop:g-pci}
    Let $n$ be finite, $f: \RR^{n} \to \RR$ be an absolutely continuous function, and $\gamma^n$ denote the standard Gaussian measure on $\RR^n$. 
    \begin{equation*}
        \Var{f} \le \E[\gamma^n]{\|\nabla f\|_{2}^2 }\,.
    \end{equation*}
\end{lemma}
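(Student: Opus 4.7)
The plan is to prove the Gaussian Poincar\'e inequality by the standard two-step strategy: establish the one-dimensional case via Hermite polynomial expansion, and then tensorize up to $n$ dimensions. By a routine mollification and truncation argument, it suffices to prove the inequality for $f \in C^\infty_c(\RR^n)$ and then extend by density, since such functions are dense in the Sobolev space $W^{1,2}(\gamma^n)$.

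For the one-dimensional case, let $(H_k)_{k \ge 0}$ denote the normalized Hermite polynomials, which form an orthonormal basis for $L^2(\gamma)$ and satisfy the derivative relation $H_k' = \sqrt{k}\,H_{k-1}$. Expand $f = \sum_{k \ge 0} a_k H_k$ with $a_k = \langle f, H_k\rangle_{L^2(\gamma)}$. Then Parseval gives $\Var_\gamma(f) = \sum_{k \ge 1} a_k^2$, while differentiating term by term yields $\E_\gamma[(f')^2] = \sum_{k \ge 1} k\, a_k^2$. Since $k \ge 1$ throughout the second sum, the one-dimensional inequality $\Var_\gamma(f) \le \E_\gamma[(f')^2]$ is immediate. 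This is the spectral gap of the Ornstein--Uhlenbeck generator, and is the only analytic content of the proof.

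For the tensorization step, I would use the martingale decomposition of the variance. With $X = (X_1,\dots,X_n)$ a standard Gaussian vector and $M_i := \E[f(X) \mid X_1,\dots,X_i]$, orthogonality of martingale differences gives
\[
    \Var(f(X)) \;=\; \sum_{i=1}^n \E\bigl[(M_i - M_{i-1})^2\bigr].
\]
Writing $M_i - M_{i-1}$ as a conditional expectation of the fluctuation of $f$ in the $i$-th coordinate and applying Jensen's inequality leads to the classical tensorization bound
\[
    \Var(f(X)) \;\le\; \sum_{i=1}^n \E\bigl[\Var_{X_i}(f(X))\bigr],
\]
where $\Var_{X_i}$ denotes variance with respect to $X_i$ alone, with the other coordinates held fixed.

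To finish, apply the one-dimensional inequality from the second paragraph to the slice $x \mapsto f(X_1,\dots,X_{i-1}, x, X_{i+1}, \dots, X_n)$ for each $i$, obtaining $\Var_{X_i}(f) \le \E_{X_i}[(\partial_i f)^2]$ pointwise in the remaining variables. Summing over $i$ and taking the full expectation produces $\sum_i \E[(\partial_i f)^2] = \E[\|\nabla f\|_2^2]$, which is the desired bound. The main step, and the only place Gaussianity is really used, is the Hermite expansion in the one-dimensional case; the tensorization is purely probabilistic and works for any product measure once the one-dimensional Poincar\'e inequality is in hand.
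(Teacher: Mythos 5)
The paper does not prove this lemma; it is stated as a classical fact and used as a black box, so there is no paper proof to compare against. Your proposal is a correct and standard proof of the Gaussian Poincar\'e inequality, and the two key ingredients --- the spectral gap of the Ornstein--Uhlenbeck operator via Hermite expansion in one dimension, and tensorization via the Efron--Stein/martingale decomposition --- are exactly the ones found in the textbook treatments (e.g.\ Boucheron--Lugosi--Massart, or Chapter 2 of the Bakry--Gentil--Ledoux monograph). One small point worth making explicit when you write it up: the hypothesis in the lemma is that $f$ is absolutely continuous, so the density argument you invoke (approximating $f$ by $C_c^\infty$ functions in $W^{1,2}(\gamma^n)$) should be stated with the implicit standing assumption that $f,\nabla f \in L^2(\gamma^n)$, since otherwise the right-hand side is $+\infty$ and the statement is vacuous; you also implicitly use that term-by-term differentiation of the Hermite series is justified for the smooth compactly supported approximants, which is true but is the place the smoothness is actually used. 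Otherwise the tensorization step via $\Var(f)=\sum_i \E[(M_i-M_{i-1})^2]$, followed by the observation that $M_i - M_{i-1}$ is the $X_i$-centering of the partial average $\E[f\mid X_1,\dots,X_i]$ and a single application of Jensen to push the $X_{>i}$-average inside the square, is exactly right and gives $\Var(f)\le\sum_i\E[\Var_{X_i}(f)]$ as claimed.
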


The Gaussian Poincar\'e inequality is often quite useful, but sometimes fails to give optimal rates. In his influential monograph \cite{chat-sc}, Chatterjee gives the name ``superconcentration'' to the variance of a random variable being far smaller than the Poincar\'e inequality implies. Chatterjee also shows the equivalence of superconcentration to the fascinating properties of ``chaos'' and ``multiple valleys.'' It remains a major open problem in this area to establish general methods for providing polynomial improvements over the Gaussian Poincar\'e inequality. However, there is a general tool for obtaining logarithmic improvements: 
\begin{lemma}[Talagrand's $L^1$-$L^2$ inequality; Theorem 5.1 of \cite{chat-sc}]\label{lemma:L1L2}
    There is some constant $C$ so that the following holds. Let $\gamma^n$ be the Gaussian measure on $\RR^n$ for any $n$, and $f: \RR^n \to \RR$ any absolutely continuous function. 
    \begin{equation}\label{eq:l1l2}
        \Var{f}  \le C \sum_{i=1}^n \|\partial_{i} f\|_{L^2(\gamma^n)}^2 \pa{ 1 + \log\pa{\frac{\|\partial_{i} f\|_{L^2(\gamma^n)}}{\|\partial_{i} f\|_{L^1(\gamma^n)}}} }^{-1}\,.
    \end{equation}
    In particular, 
    \begin{equation}\label{eq:l1l2-jensen}
        \Var{f}  \le C \pa{\sum_{i=1}^n \|\partial_{i} f\|_{L^2(\gamma^n)}^2 }\pa{ 1 + \frac{1}{2}\log\pa{\frac{\sum_i\|\partial_{i} f\|_{L^2(\gamma^n)}^2}{\sum_i\|\partial_{i} f\|_{L^1(\gamma^n)}^2}} }^{-1}\,.
    \end{equation}
\end{lemma}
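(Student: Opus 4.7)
The lemma has two displayed inequalities: \eqref{eq:l1l2} is the classical Talagrand $L^1$-$L^2$ inequality, and \eqref{eq:l1l2-jensen} follows from it by Jensen's inequality. Since \eqref{eq:l1l2} is attributed in the excerpt to \cite{chat-sc}, my plan is to sketch the standard semigroup argument that yields it, and then to deduce \eqref{eq:l1l2-jensen} via an explicit concavity computation.

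For \eqref{eq:l1l2}, the starting point is the Ornstein--Uhlenbeck semigroup $(P_t)_{t \ge 0}$ on $L^2(\gamma^n)$ with generator $L = \Delta - x \cdot \nabla$. Assuming $\E{f} = 0$ without loss of generality, the identities $-\frac{d}{dt}\|P_t f\|_2^2 = 2\|\nabla P_t f\|_2^2$ and $P_\infty f = 0$ yield the variance representation
\begin{equation*}
    \Var{f} = 2\int_0^\infty \|\nabla P_t f\|_{L^2(\gamma^n)}^2\, dt.
\end{equation*}
The commutation identity $\partial_i P_t = e^{-t} P_t \partial_i$ converts this to $2\sum_i \int_0^\infty e^{-2t} \|P_t \partial_i f\|_2^2\, dt$. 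Nelson's hypercontractivity bound $\|P_t g\|_{L^2} \le \|g\|_{L^{p(t)}}$ with $p(t) := 1 + e^{-2t}$, followed by log-convex $L^1$-$L^2$ interpolation with parameter $\theta(t) := (1-e^{-2t})/(1+e^{-2t})$, gives
\begin{equation*}
    \|P_t \partial_i f\|_{L^2}^2 \le \|\partial_i f\|_{L^{p(t)}}^2 \le \|\partial_i f\|_{L^2}^2 \, R_i^{-2\theta(t)}, \qquad R_i := \frac{\|\partial_i f\|_{L^2}}{\|\partial_i f\|_{L^1}} \ge 1.
\end{equation*}
The remaining scalar integral $\int_0^\infty e^{-2t} R_i^{-2\theta(t)}\, dt$ reduces, via the substitution $v = 2\theta(t)$, to a constant multiple of $\int_0^2 R_i^{-v}(2+v)^{-2}\, dv$, which in turn is bounded above by a universal multiple of $(1 + \log R_i)^{-1}$ (checking $R_i \le e$ and $R_i > e$ separately). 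Combining these estimates yields \eqref{eq:l1l2}.

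For \eqref{eq:l1l2-jensen}, write $a_i := \|\partial_i f\|_{L^2}^2$ and $b_i := \|\partial_i f\|_{L^1}^2$, and observe that Cauchy--Schwarz against the probability measure $\gamma^n$ forces $x_i := a_i/b_i \ge 1$. Each summand on the right-hand side of \eqref{eq:l1l2} equals $b_i \cdot g(x_i)$, where $g(x) := x/(1+\tfrac{1}{2}\log x)$. Introducing probability weights $\alpha_i := b_i/\sum_j b_j$ rewrites the bound as
\begin{equation*}
    \sum_i \frac{a_i}{1 + \tfrac{1}{2}\log(a_i/b_i)} \;=\; \pa{\sum_j b_j} \sum_i \alpha_i\, g(x_i).
\end{equation*}
A short calculation gives $g''(x) = -\log(x)/\bigl(4x(1+\tfrac{1}{2}\log x)^3\bigr) \le 0$ for $x \ge 1$, so $g$ is concave on $[1,\infty)$. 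Jensen's inequality then bounds the previous display by $(\sum_j b_j) \cdot g(\sum_i \alpha_i x_i) = (\sum_j b_j) \cdot g\bigl(\sum_i a_i/\sum_j b_j\bigr)$, which is exactly the right-hand side of \eqref{eq:l1l2-jensen}.

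The main technical obstacle is the integral estimate producing the logarithmic weight in \eqref{eq:l1l2}: one must balance the exponential prefactor $e^{-2t}$ against the shrinking interpolation exponent $\theta(t)$ near $t = 0$ so that the logarithmic gain actually emerges and is not swamped by the contribution from $t$ of order one. The hypercontractivity input and the Jensen step are otherwise standard.
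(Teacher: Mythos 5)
Your proposal is correct and takes essentially the same route as the paper, which leans on \cite{chat-sc} for both halves: it cites Chatterjee's Theorem 5.1 for \eqref{eq:l1l2} without proof, and for \eqref{eq:l1l2-jensen} it simply says to apply Jensen to a concave auxiliary function, pointing to the proof of Theorem 5.4 of \cite{chat-sc} for details. Your semigroup sketch of \eqref{eq:l1l2}---OU variance representation, commutation identity, Nelson hypercontractivity, $L^1$-$L^2$ interpolation, and the scalar integral bound $2\int_0^2 R^{-v}(2+v)^{-2}\,dv \lesssim (1+\log R)^{-1}$ for $R\ge 1$---is exactly the standard argument underlying the cited theorem, and all the exponents ($p(t)=1+e^{-2t}$, $\theta(t)=(1-e^{-2t})/(1+e^{-2t})$) are right.

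For the Jensen step, your parametrization differs cosmetically from the paper's: you weight by $b_i=\|\partial_i f\|_{L^1}^2$ and use $g(x)=x/(1+\tfrac12\log x)$, concave on $[1,\infty)$, with argument $x_i=a_i/b_i\ge 1$; the paper instead speaks of the reciprocal quantity $b_i/a_i\in(0,1]$ with weights proportional to $a_i$. These are dual formulations of the same inequality. In fact your version is slightly cleaner: the paper states the function as $(1+\log(x)/2)^{-1}$ and calls it concave on $(0,1)$, but that expression is actually convex (and changes sign) there---the correct reciprocal function is $(1-\tfrac12\log x)^{-1}$. Your $g''(x) = -\log(x)/\bigl(4x(1+\tfrac12\log x)^3\bigr)$ computation is correct, as is the Cauchy--Schwarz observation that $x_i\ge 1$ (and hence $\sum_i \alpha_i x_i \ge 1$) so that $g$ is evaluated where it is genuinely concave. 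No gaps.
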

The usual formulation of Talagrand's inequality is \cref{eq:l1l2}. Here, \cref{eq:l1l2-jensen} will be more convenient for us. It readily follows from \cref{eq:l1l2} by applying Jensen's inequality to the function $g(x) = (1 + \log(x)/2)^{-1}$, which is concave on $(0,1)$. The details can be found in the proof of Theorem 5.4 of \cite{chat-sc}.\\

Talagrand's inequality is based on hypercontractivity---a way of quantifying the extreme smoothing properties of the heat flow. Roughly speaking, the Poincar\'e inequality can fail to yield optimal rates because it forces a heavy quadratic penalty on inputs for which $f$ has large derivative, even if the Gaussian measure assigns little mass to these locations. Hypercontractivity allows for mitigation of this penalty. \\

We plan to apply Talagrand's inequality to the variance of the margin. This requires differentiating the margin. Since the margin is defined in terms of a distance between two sets, this will consist of two tasks. First, interchanging a derivative and an infimum. Second, differentiating the $\ell^q$ distance. The former will be accomplished by the classical ``envelope theorem'' of Milgrom and Segal (Theorem 1 in \cite{milgrom-segal}). 

\begin{lemma}[Envelope Theorem]
\label{thm:envelope}
    Let $f:X \times [0,1] \to \RR$, where $X$ is a subset of $\RR^{n}$. Define the value function 
    \[
        V(t) := \sup_{x \in X} f(x,t)\,.
    \]
    For any $t \in (0,1)$ and any $x^* \in \arg\max_{x \in X} f(x,t)$, if $V'(t)$ and the partial derivative $f_t(x^*,t)$ both exist, 
    \[
        V'(t) = f_t(x^*,t)\,.
    \]
\end{lemma}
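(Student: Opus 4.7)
The proof proposal is short because the statement already grants existence of both derivatives, so no differentiability needs to be established; the content is simply that the two must agree. The guiding observation is that optimality of $x^*$ at time $t$ gives a pointwise minorant of $V$ that touches $V$ at $t$: for every $s \in [0,1]$,
\[
    V(s) \ge f(x^*, s), \qquad V(t) = f(x^*, t)\,.
\]
Equivalently, the function $g(s) := V(s) - f(x^*,s)$ is nonnegative on $[0,1]$ and vanishes at $s = t$, so $t$ is a global minimum of $g$. From this, the envelope identity should drop out of a two-sided difference-quotient comparison.

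The plan is to take one-sided limits at $t$ separately. Subtracting and dividing by $s - t > 0$, the inequality $V(s) - V(t) \ge f(x^*,s) - f(x^*,t)$ gives
\[
    \frac{V(s) - V(t)}{s-t} \ge \frac{f(x^*,s) - f(x^*,t)}{s-t}\,,
\]
and letting $s \downarrow t$, using that both $V'(t)$ and $f_t(x^*,t)$ exist by hypothesis, yields $V'(t) \ge f_t(x^*, t)$. For the reverse inequality, repeat with $s < t$: the same inequality on numerators is preserved, but dividing by $s-t < 0$ flips the sign, producing
\[
    \frac{V(s) - V(t)}{s-t} \le \frac{f(x^*,s) - f(x^*,t)}{s-t}\,,
\]
and sending $s \uparrow t$ delivers $V'(t) \le f_t(x^*,t)$. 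Combining the two one-sided bounds gives the desired equality.

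There is essentially no obstacle here beyond bookkeeping: the classical difficulty with envelope-type theorems is establishing differentiability of the value function $V$, but that has been assumed away. The only subtlety to watch for is that one must use \emph{both} one-sided limits, since taking only $s > t$ yields an inequality rather than equality. Nothing in the argument uses the structure of $X \subset \RR^n$ beyond the definition of the supremum, and the conclusion is purely pointwise at $t$, so no uniformity or selection arguments are required.
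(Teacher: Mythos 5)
Your proof is correct and is essentially the standard argument from the cited reference (Milgrom and Segal, Theorem 1), which the paper invokes without reproducing a proof: note that the paper states this lemma as a quoted classical fact and provides no proof of its own. The minorant observation $V(s) \ge f(x^*,s)$ with equality at $s = t$, followed by the two one-sided difference quotients with the sign flip when dividing by $s - t < 0$, is exactly how Milgrom and Segal argue, so there is no gap and no divergence in method to report.
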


We next collect some regularity results on $\ell^q$ distances for the sake of differentiating $\ell^q$ distances later. In what follows, let $E \subset \RR^{m}$ be an arbitrary closed set and define
\[
    f_q(x) := d_q(x,E)\,.
\]
Recall our notation that a function $F: \RR^{M} \to \RR$ is called $(L,q)$-Lipschitz if it is Lipschitz continuous with constant $L$ with respect to $\ell^q$ perturbations to the input.
\begin{proposition}\label{prop:lip}
    Fix $q \in [2,\infty)$ and a non-empty, closed set $E \subset \RR^{M}$. Let $f_q(x) := d_q(x,E)$ be the $\ell^q$ distance between $x$ and $E$. The following hold:
    \begin{itemize}
        \item $f_q$ is $(1,q)$-Lipschitz continuous everywhere.
        \item $f_q$ is absolutely continuous everywhere and differentiable for Lebesgue a.e. $x$. 
        \item Let $x$ be a point of differentiability for $f_q(x)$. There is at least one $\ell^q$-projection of $x$ onto $E$. Letting $y$ denote such a projection and $v := x-y$, the vector $|v|$ does not depend on the choice of $y$. Additionally, for all $i \in [M]$ and Lebesgue a.e. $x$,
        \[
            \ba{\pa{\nabla f_q(x)}_i} = \frac{|v_i|^{q-1}}{\|v\|_{q}^{q-1}}\,.
        \]
    \end{itemize} 
\end{proposition}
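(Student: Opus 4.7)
The plan is to establish the three bullets in order using a simple ``dominating function'' argument: at a point of differentiability of $f_q$, the gradient can be read off from any $\ell^q$-projection of $x$ onto $E$ by noting that $f_q$ is pointwise majorized by $\|\cdot - y\|_q$ with equality at $x$.

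The Lipschitz bound is immediate from the triangle inequality for $\|\cdot\|_q$: for any $x,z \in \RR^M$ and $y \in E$, $\|x - y\|_q \le \|x - z\|_q + \|z - y\|_q$; infimizing in $y$ and swapping $x \leftrightarrow z$ gives $|f_q(x) - f_q(z)| \le \|x - z\|_q$. Lipschitz continuity implies absolute continuity along every line, and Rademacher's theorem implies differentiability Lebesgue-a.e., settling the first two bullets.

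For the gradient formula, existence of an $\ell^q$-projection follows from compactness: since $q < \infty$ and $E$ is closed, the slab $E \cap \overline{B_q}(x, 2 f_q(x))$ is closed and bounded, hence compact, so the continuous function $y \mapsto \|x-y\|_q$ attains its infimum on it. Fixing such a projection $y$ and setting $g_y(z) := \|z - y\|_q$ and $v := x - y$, one has $f_q \le g_y$ pointwise with $f_q(x) = g_y(x)$, so the nonnegative function $g_y - f_q$ attains its minimum at $x$. At any common point of differentiability this forces $\nabla f_q(x) = \nabla g_y(x)$. For $q \in [2,\infty)$ the map $t \mapsto |t|^q$ is $C^1$, so when $v \neq 0$ a direct computation yields
\[
    \partial_i g_y(x) = \frac{|v_i|^{q-2} v_i}{\|v\|_q^{q-1}}\,,
\]
whose absolute value is the claimed expression. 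Independence of $|v|$ from the projection is then immediate: if $v^{(1)}, v^{(2)}$ arise from two projections, their $\ell^q$-norms agree (both equal $f_q(x)$), and the identity $|v^{(1)}_i|^{q-2} v^{(1)}_i = |v^{(2)}_i|^{q-2} v^{(2)}_i$ combined with the strict monotonicity of $t \mapsto |t|^{q-2} t$ on $\RR$ for $q \ge 2$ forces $v^{(1)}_i = v^{(2)}_i$, a fortiori $|v^{(1)}| = |v^{(2)}|$.

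The main technical worry is the degenerate case $v = 0$, i.e.\ $x \in E$, where the formula formally reads $0/0$; but any differentiability point in $E$ satisfies $\nabla f_q(x) = 0$ since $f_q \ge 0$ attains its minimum there, and the exceptional locus is absorbed by the ``Lebesgue a.e.'' clause. The dominating-function argument also sidesteps a direct invocation of \cref{thm:envelope} for differentiating inside an infimum, which would otherwise require setting up a parametric family of directional derivatives and arguing about measurability of the argmin.
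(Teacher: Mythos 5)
Your argument for the first two bullets coincides with the paper's (triangle inequality, then Rademacher via equivalence of norms). For the third bullet you take a genuinely different and somewhat cleaner route. The paper works directly from the definition of the derivative: it perturbs $x$ toward the projection $y$, shows that the resulting increment forces H\"older's inequality $|u\cdot v| \le \|u\|_{q^*}\|v\|_q$ to hold with equality (where $u = \nabla f_q(x)$, $v=x-y$), and then invokes the equality condition in H\"older to read off $|u_i|$. Your dominating-function argument instead observes that $g_y(\cdot) := \|\cdot - y\|_q$ is a $C^1$ majorant of $f_q$ touching it at $x$, so at a common differentiability point the gradients must agree, and then computes $\nabla g_y(x)$ directly. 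This sidesteps H\"older entirely and, notably, delivers a strictly stronger conclusion than the paper records: because you identify the signed gradient $\nabla g_y(x)$ and the map $t\mapsto |t|^{q-2}t$ is a bijection on $\RR$ for $q\ge 2$, you get uniqueness of the signed vector $v$ (hence of the projection $y$) at differentiability points, not merely of $|v|$. The paper's H\"older route only pins down $|u_i|$, hence only $|v|$. Your treatment of the degenerate case $v=0$ (first-order optimality of $f_q\ge 0$ at $x\in E$) is also correct and handles the $0/0$ convention cleanly. Both proofs are sound; yours is somewhat more self-contained since it avoids citing the equality case of H\"older.
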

These are standard facts, but we give a proof for completeness.

\begin{proof}[Proof of \cref{prop:lip}]
    Let $z \in E$ be arbitrary and let $x$ and $y$ be outside $E$. Fix $q \ge 2$ and set $d := d_q$. By triangle inequality,
    \[
        d(x,E) \le d(x,z) \le d(x,y) + d(y,z)\,. 
    \]
    Taking an infimum over $z \in E$ on the far right-hand side yields $d(x,E) - d(y,E) \le d(x,y)$. Reversing the roles of $x$ and $y$ yields a symmetric bound, completing the proof of the first claim. \\

    By equivalence of norms (i.e. since $\ell_q \le \ell_2 \le \sqrt{N} \ell_q$ for any $q \in [2,\infty]$), we have that $f_q$ is Lipschitz continuous with respect to the Euclidean norm. This implies absolute continuity everywhere and thus differentiability almost everywhere by Rademacher's theorem. \\

    Let $x$ be a point of differentiability of $f_q$ and $y$ be an $\ell^q$-projection of $x$ onto $E$. That is, $y \in \arg\min_{y' \in E} \|x-y'\|_{\ell^q}$. At least one such $y$ exists by the assumption that $E$ is closed. By definition of the derivative, there exists a unique vector $u$ (namely, the gradient of $f_q$ at $x$) such that for $w$ sufficiently near $x$,
    \begin{equation}\label{eq:deriv}
        f_q(w) - f_q(x) = u \cdot (w-x) + \co{\|w-x\|}\,.
    \end{equation}
    Since we just showed $f_q$ is $(1,q)$-Lipschitz, it follows that $\|u\|_{q^*} \le 1$ where $q^*$ is the conjugate exponent to $q$. Letting $w = x + \eps (y - x)$, we obtain by homogeneity of the $\ell^q$ norm
    \begin{align*}
        |f_q(w) - f_q(x)| &= \eps \|y-x\|_q\,.
    \end{align*}
    Combining this with \cref{eq:deriv}, \begin{equation}\label{eq:holder-equal}
        |u \cdot (w-x)| = \eps\|y-x\|_{q} + \co{\eps}\,.
    \end{equation}
    On the other hand, because $\|u\|_{q^*} \le 1$, applying H\"older's inequality yields:
    \begin{equation}\label{eq:holder-ineq}
        |u \cdot (w-x)| \le \|w-x\|_{q}\|u\|_{q^*} = \eps \|y-x\|_{q} \|u\|_{q^*} \le \eps \|y-x\|_{q}\,. 
    \end{equation}
    But, \cref{eq:holder-equal} shows that the application of H\"older's inequality in \cref{eq:holder-ineq} actually holds with equality, up to some vanishing error. For $q < \infty$, it is a standard fact \cite{holder} that this implies
    \[
        |u_i| = \frac{|y-x|_i^{q-1}}{\|y-x\|_{q}^{q-1}}\,.
    \]
    Here, we have adopted the convention that the above is zero if $\|y-x\|_q = 0$. In summary, for any $\ell^q$ projection $y$ of $x$ onto $E$, we have explicitly computed the gradient $u$ of $f_q(x)$ in terms of $x$ and $y$ up to the sign of its components. If there exists another projection $y'$ that does not satisfy: $|y'-x| = |y-x|$ for all $i$, then the uniqueness of $u$ (and thus the differentiability of $f_q$ at $x$) would be contradicted. So, $|y-x|$ does not depend on the choice of $y$. This completes all claims.
\end{proof}

Although similar statements hold for $q = \infty$, there is a lack of uniqueness for optimizers of H\"older's inequality, making direct analysis more difficult. We sidestep this issue with the standard approach of simply taking $q$ large. Let us make this precise. 

\begin{proposition}\label{prop:var-perturb}
    Let $X$ and $Y$ be random variables with finite second moments. Then:
    \begin{equation}\label{eq:var-add}
        \Var {X+Y} \le 2\pa{\Var X + \Var Y }
    \end{equation}
    and
    \begin{equation}\label{eq:var-perturb}
        \Var X \le 2\pa{\E{|X-Y|^2} + \Var Y }
    \end{equation}
\end{proposition}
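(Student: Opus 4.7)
The plan is to treat both inequalities via elementary manipulations of the variance functional. For \cref{eq:var-add}, I would expand the variance of a sum using the standard identity
\[
    \Var(X+Y) = \Var X + \Var Y + 2\Cov(X,Y),
\]
bound the covariance by Cauchy--Schwarz to get $|\Cov(X,Y)| \le \sqrt{\Var X \cdot \Var Y}$, and then apply AM--GM to conclude $\sqrt{\Var X \cdot \Var Y} \le \tfrac{1}{2}(\Var X + \Var Y)$. Substituting back yields the factor of $2$ claimed.

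For \cref{eq:var-perturb}, the idea is to write $X = (X - Y) + Y$ and apply \cref{eq:var-add} to the pair $(X-Y, Y)$, giving
\[
    \Var X \le 2\bigl(\Var(X - Y) + \Var Y\bigr).
\]
Then I would bound $\Var(X-Y) \le \E[(X-Y)^2] = \E{|X-Y|^2}$, which holds because the variance of any random variable is at most its second moment. Combining these two steps yields the desired inequality.

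Neither inequality presents real difficulty; both are standard and the ``main obstacle'' is simply assembling the right two facts in the right order. The only minor subtlety is that the factor of $2$ in \cref{eq:var-add} is not tight (indeed, it can be replaced by any constant larger than $2$ by suboptimizing differently), but the stated form with constant $2$ is exactly what drops out of AM--GM and is convenient for the later application of this lemma in perturbing the margin by smoothed approximations.
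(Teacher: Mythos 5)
Your proof is correct, but it takes a slightly different (if equivalent in spirit) route from the paper. The paper proves both inequalities in one line by invoking the pointwise inequality $(a+b)^2 \le 2(a^2+b^2)$: for \cref{eq:var-add}, apply it to the centered variables $a = X - \E X$, $b = Y - \E Y$ and take expectations; for \cref{eq:var-perturb}, combine \cref{eq:var-add} with $\Var(X-Y) \le \E|X-Y|^2$ exactly as you do. You instead expand $\Var(X+Y)$ via the covariance identity, bound the cross term with Cauchy--Schwarz, and finish with AM--GM. Both arguments are elementary and each is ultimately powered by the same convexity fact (indeed $(a+b)^2\le 2(a^2+b^2)$ is AM--GM for squares), so the difference is mostly stylistic; the paper's version is a bit shorter because it avoids introducing covariance. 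One small remark: your aside that the constant $2$ in \cref{eq:var-add} is ``not tight'' is backwards --- taking $X=Y$ shows $\Var(2X)=4\Var X$, so $2$ is the best possible constant; what you presumably meant is that a lazier argument yields a worse constant, which is true but not a statement about tightness.
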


\begin{proof}
Both are trivial consequences of the inequality $(a+b)^2 \le 2(a^2+b^2)$, valid for all real $a$ and $b$.     
\end{proof}

\begin{proposition}\label{prop:infty-approx}
    For any $q \ge \log(M)^2$, we have
    \[
        \M_\infty = \M_q\mO{\frac{1}{\log M}}\,.
    \]
\end{proposition}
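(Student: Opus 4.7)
The plan is to reduce this to the standard two-sided comparison of $\ell^q$ and $\ell^\infty$ norms on $\RR^M$, namely
\[
    \|v\|_\infty \le \|v\|_q \le M^{1/q}\|v\|_\infty \qquad \forall v\in\RR^M.
\]
First I would apply this pointwise to $v = A\sigma - y$. For the right-hand inequality take $y$ to be an $\ell^\infty$-projection of $A\sigma$ onto the closed set $E$ (which exists since $E$ is closed); for the left-hand inequality take $y$ to be an $\ell^q$-projection. This yields the pointwise-in-$\sigma$ sandwich
\[
    d_\infty(A\sigma,E) \;\le\; d_q(A\sigma,E) \;\le\; M^{1/q}\,d_\infty(A\sigma,E).
\]

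Second, I would take $\min_{\sigma\in Q}$ throughout, using the $\ell^\infty$-optimal $\sigma$ to upgrade the right-hand inequality and the $\ell^q$-optimal $\sigma$ for the left-hand one. This gives
\[
    \M_\infty \;\le\; \M_q \;\le\; M^{1/q}\,\M_\infty.
\]

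Third, I would plug in the hypothesis $q \ge (\log M)^2$. Then $M^{1/q} = \exp(\log M / q) \le \exp(1/\log M) = 1 + O(1/\log M)$ for $M$ large, so
\[
    0 \;\le\; \M_q - \M_\infty \;\le\; (M^{1/q}-1)\,\M_\infty \;=\; O\!\left(\M_\infty/\log M\right),
\]
which is the claimed statement under the convention of $\mO{\cdot}$.

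There is no real obstacle here; the content is purely the norm comparison plus the precise tuning $q\ge(\log M)^2$ that makes $M^{1/q}-1=O(1/\log M)$. The only subtlety worth flagging is that the bound carries an implicit factor of $\M_\infty$, which is why the intended downstream use must pass through the $L^2$ version of \cref{prop:var-perturb}: to conclude $\Var(\M_\infty)=O(1/\log M)$ from $\Var(\M_q) = O(1/\log M)$ one also needs $\E[\M_\infty^2] = O(\log M)$, but this is automatic since $\M_\infty \le \|A\sigma_0 - y_0\|_\infty$ for any fixed $\sigma_0\in Q$ and $y_0\in E$, and such a maximum of $M$ unit-variance Gaussians has second moment of order $\log M$.
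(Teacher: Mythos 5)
Your proof of the proposition itself is correct and is exactly the norm-comparison argument the paper alludes to when it says the proof is ``an immediate application of the classical fact'' $\|\cdot\|_\infty = \|\cdot\|_{\log(M)^2}\,\mO{1/\log M}$; taking $\ell^\infty$- and $\ell^q$-projections for the two directions of the sandwich, then optimizing over $\sigma\in Q$, is the right way to transfer the norm inequality to the margin.

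However, the closing remark about the downstream application contains an error worth flagging. You argue that $\E[\M_\infty^2]=O(\log M)$ ``is automatic since $\M_\infty \le \|A\sigma_0 - y_0\|_\infty$ \dots and such a maximum of $M$ unit-variance Gaussians has second moment of order $\log M$.'' But the coordinates of $A\sigma_0 - y_0$ are \emph{not centered} Gaussians; their means are $-(y_0)_i$. Since the only assumptions on $E$ are that it is closed and permutation symmetric, $E$ may lie arbitrarily far from the origin (e.g.\ $E=\{C\mathbf{1}\}$ for huge $C$), in which case $\E[\M_\infty^2]\asymp \|y_0\|_\infty^2 + \log M$ can be much larger than $O(\log M)$. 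In that regime the estimate $\E[(\M_\infty-\M_q)^2] \le (M^{1/q}-1)^2\,\E[\M_\infty^2]$ is useless, even though the conclusion of \cref{thm:margin} is shift-invariant and should still hold. So either an additional normalization of $E$ (or a variance-based rather than second-moment-based perturbation bound) is needed to make the bootstrap from $q=(\log M)^2$ to $q=\infty$ rigorous, and your proposed justification does not supply it.
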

In particular, combining \cref{prop:var-perturb} and \cref{prop:infty-approx} will allow us to study the variance of $\M_\infty$ via the variance of $\M_{\log(M)^2}$, which is easier to differentiate due to \cref{prop:lip}. We omit the proof of \cref{prop:infty-approx} since it is an immediate application of the classical fact: 
\[
    \|\cdot\|_{\infty} = \|\cdot\|_{\log(M)^2} \mO{\frac{1}{\log M}}\,. 
\]
We conclude our preliminary discussion by illustrating the utility of the Poincar\'e inequality with a quick proof of  \cref{thm:mtx-spencer}, the sharp threshold for random matrix balancing problem. The following classical fact is needed.  

\begin{lemma}\label{lemma:op-grad}
Let $A$ be a symmetric matrix of full rank with distinct eigenvalues. Let $\lambda$ be the largest eigenvalue and let $u$ be the corresponding unique eigenvector of unit norm. 
\begin{equation}\label{eq:op-grad}
    \frac{\partial}{\partial A_{ij}}\lambda = u_iu_j
\end{equation}
\end{lemma}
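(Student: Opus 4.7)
The plan is to apply the envelope theorem, \cref{thm:envelope}, to the variational characterization of the top eigenvalue. By Courant--Fischer, $\lambda = \max_{v \in S^{n-1}} v^T A v$, and the maximum is attained uniquely (up to sign) at $v = u$ since the top eigenvalue is simple by the assumption that the eigenvalues are distinct. Treating $A_{ij}$ as the scalar parameter and $f(v, A_{ij}) := v^T A v$ as the objective, the partial derivative of $f$ with respect to $A_{ij}$ is simply $v_i v_j$, independent of the other entries. Applying \cref{thm:envelope} at the optimizer $v = u$ yields $\partial_{A_{ij}} \lambda = u_i u_j$.

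The one technical point is justifying that $\lambda$ is differentiable as a function of $A_{ij}$, as required to invoke \cref{thm:envelope}. This is standard analytic perturbation theory for simple eigenvalues: $\lambda$ is a simple root of the characteristic polynomial $\det(A - \lambda I)$, so the implicit function theorem produces a smooth branch $\lambda(t)$ of eigenvalues as one perturbs $A_{ij} \mapsto A_{ij} + t$, along with a smooth branch $u(t)$ of unit eigenvectors.

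As an alternative to the envelope approach, one can compute directly using this smooth branch. Differentiating the identity $A(t) u(t) = \lambda(t) u(t)$ at $t = 0$ gives
\[
    E_{ij} u + A \dot{u} = \dot{\lambda}\, u + \lambda\, \dot{u}\,,
\]
where $E_{ij}$ is the matrix with a single $1$ in position $(i,j)$. Taking the inner product with $u$ and using $u^T A = \lambda u^T$ (by symmetry of $A$) cancels the $\dot u$ terms, leaving $\dot{\lambda} = u^T E_{ij} u = u_i u_j$.

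The result is classical Rayleigh--Hellmann--Feynman perturbation theory, so there is no genuine obstacle; the distinctness hypothesis is precisely what ensures the map $A \mapsto (\lambda, u)$ is smooth in a neighborhood, which is what makes both arguments above go through.
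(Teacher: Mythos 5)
Your proof is correct. The ``alternative'' you give---differentiating $A(t)u(t)=\lambda(t)u(t)$, pairing with $u$, and using $u^TA=\lambda u^T$ to cancel the $\dot u$ terms---is precisely the paper's own argument. Your primary route via the Courant--Fischer characterization $\lambda = \max_{\|v\|=1} v^TAv$ together with the envelope theorem (\cref{thm:envelope}) is a mild but pleasant variant: it reuses the very tool the paper introduces for differentiating the margin, and it makes transparent why the derivative is simply $\partial_{A_{ij}}(v^TAv)|_{v=u} = u_iu_j$ without ever touching $\dot u$. Both arguments invoke the same ingredient---simplicity of the top eigenvalue to guarantee a smooth eigenpair---so the two proofs are essentially interchangeable.
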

\begin{proof}[Proof of \cref{lemma:op-grad}]
Implicitly differentiate the formula 
\[
    Au = \lambda u\,,
\]
and then left multiply by $u^t$. Using that $u^tdu = (d\|u\|_2^2)/2 = 0$, and then using symmetry of $A$ so that $u^tA = \lambda u^t$, we obtain the result. 
\end{proof}

\begin{proof}[Proof of \cref{thm:mtx-spencer}] Let $G = (A^{(1)},\dots,A^{(N)})$ be a collection of $N$ matrices---each of dimension $d \times d$ with iid standard Gaussian entries---flattened into a vector of length $Nd^2$. Let $Q := N^{-1/2}\cb{-1,+1}^N$. The operator norm of $\sum_i \sigma_iA^{(i)}$ for any fixed $\sigma \in Q$ is Lipschitz in $G$, and thus $\disc(G)$ is Lipschitz as well. By Rademacher's theorem, both quantities are then differentiable for almost every $G$. Applying \cref{thm:envelope} to compute the partial derivatives of $\disc(G)$ yields that for almost every $G$ there is a vector $\sigma^* := \sigma^*(G) \in Q$ with $\disc(G) = d^{-1/2}\|\sum_i \sigma^*_i A^{(i)}\|$ and:
\[
    \ba{\partial_{A^{(i)}_{jk}} \disc(G)} = \ba{\frac{1}{\sqrt{d}} \partial_{A^{(i)}_{jk}} \left\|\sum \sigma_i^* A^{(i)} \right\|_{\mathrm{op}} } = \frac{1}{\sqrt{d}}u_j^*u_k^*\sigma_i^*, \quad \forall\,(j,k) \in [d]^2\,.
\]
Here, $u^*$ denotes the unit eigenvector associated with the top eigenvalue of $\sum \sigma^*_i A^{(i)}$. Finally, applying \cref{prop:g-pci} to $\disc(G)$ and using that both $u^*$ and $\sigma^*$ are unit vectors, we obtain:
\begin{align*}
    \mathrm{Var}\pa{\disc(G)} \le \frac{1}{d}~ \E{\|\nabla \disc(G) \|_2^2} = \frac{1}{d}~\sum_{ijk} \pa{u^*_ju^*_k\sigma_i^*}^2 = \frac{1}{d}\|u^*\|_2^4  = \frac{1}{d}\,.
\end{align*}
This is far from sharp and can easily be improved, even polynomially so. But it suffices for a sharp threshold. This concludes the upper bound on the variance. The lower bound that $\E{\disc(G)} \ge \cOm{1}$ follows from a first moment method (see Theorem 1.13 of \cite{tim-mtx}; this is the only place that the assumption $N \lesssim d^2$ is needed). 
\end{proof}

\section{Concentration of the margin}
\begin{proof}[Proof of \cref{thm:margin}]
By \cref{prop:infty-approx} in conjunction with \cref{prop:var-perturb}, we may assume without loss of generality that $q \le \log(M)^2$. In particular, $q$ is finite (but possibly $M$-dependent). \\

Let us verify the differentiability of the margin. Define the function $g(A,\sigma) = d_q(A\sigma, E)$. For any matrix $B \in \RR^{M \times N}$, we have by triangle inequality:
\begin{align*}
    \ba{g(A,\sigma) -g(B,\sigma)} &\le d_q(A\sigma, B\sigma) \\
    &\le \|A-B\|_{2,q}\|\sigma\|_2 \\
    &\le \|A-B\|_{2,q}\,.
\end{align*}
The final inequality follows from the theorem assumption that the feasible set is bounded in $\ell^2(\RR^N)$. By equivalence of matrix norms, we have that $g$ is $\ell^2$-Lipschitz with some finite (possibly $N$-dependent) constant, uniformly for $\sigma \in Q$. By Rademacher's theorem, $\nabla_A g(A,\sigma)$ exists for almost every $A$.

Similarly, the margin $\M_q(A)$ is also Lipschitz and thus differentiable for almost every $A$. Indeed, by the triangle inequality:
\begin{equation}\label{eq:margin-diffable}
    |\M_q(A) - \M_q(B)| \le \sup_{\sigma \in Q} \|A\sigma - B\sigma\|_q \le \|A-B\|_{2,q}\,.
\end{equation}

Additionally, by \cref{prop:lip} and the chain rule, we have for each $\sigma \in Q$ and almost every $A$ that the derivative of $g(A,\sigma)$ exists and is given by:
\begin{equation}\label{eq:partial-G}
    \ba{\partial_{A_{ij}} g(A,\sigma)} = |\sigma_j| \frac{|v_i|^{q-1}}{\|v\|_{q}^{q-1}}\,, \quad \forall~i \in [M],~j \in [N]\,,
\end{equation}
where $z$ is an $\ell^q$-projection of $A\sigma$ onto $E$ and $v = A\sigma - z$. Recall that $|v|$ is well-defined (i.e. does not depend on the choice of $z$) by \cref{prop:lip}. And, as verified in \cref{eq:margin-diffable}, $\nabla \M_q(A)$ exists for almost every $A$. Thus, for almost every $A$ and any $\sigma^*$ with $g(A,\sigma^*) = \M(A)$, applying \cref{thm:envelope}  to compute each partial derivative of $\M_q(A)$ using \cref{eq:partial-G} yields:
\[
     \left|\nabla \M(A)\right| =\pa{ |\sigma_j^*| \frac{|v_i|^{q-1}}{\|v\|_{q}^{q-1}} }_{i \in [M],~j \in [N]}\,.
\]

We turn to bounding the variance of $\M_q$ using Talagrand's $L^1$-$L^2$ inequality. The quantities involved in \cref{eq:l1l2-jensen} of \cref{lemma:L1L2} are:
    \begin{align*}
        a_{ij}^2 &:= \E{\ba{\partial_{A_{ij}} \M_{q,E}(A) }}^2 \\
        b_{ij}^2 &:=\E{\ba{\partial_{A_{ij}} \M_{q,E}(A) }^2}\,.
    \end{align*}
By H\"older's inequality, the $q$ and $(q-1)$ norm of $v$ cannot be too far apart:
\begin{equation}\label{eq:norm-comp}
    \|v\|_q^{q-1} \le \|v\|_{q-1}^{q-1} \le \pa{M}^{\frac{1}{q}} \|v\|_{q}^{q-1}\,.
\end{equation}
Additionally, by permutation-invariance of the constraint set $E$, 
\begin{equation}\label{eq:perm-invar-a}
    a_{ij}^2 = \E{\ba{\sigma_j^*} \frac{|v_i|^{q-1}}{\|v\|_{q}^{q-1}}}^2 = \pa{\frac{1}{M}~\E{\ba{\sigma_j^*} \frac{\|v\|_{q-1}^{q-1}}{\|v\|_{q}^{q-1}}}}^2\,.
\end{equation}
Combining \cref{eq:norm-comp} and \cref{eq:perm-invar-a},
\begin{align}
    \sum_{ij} a_{ij}^2 &= \frac{1}{M}\sum_{j}~\E{ \ba{\sigma_j^*} \frac{\|v\|_{q-1}^{q-1}}{\|v\|_{q}^{q-1}}}^2 \le M^{\frac{2}{q} - 1}\sum_{j}~\E{ \ba{\sigma_j^*} }^2 \le M^{\frac{2}{q} - 1}\E{ \|\sigma^*\|_2^2} \le M^{\frac{2}{q} - 1}\,.
     \label{eq:ub-a}
\end{align}
The penultimate inequality holds by an application of Jensen's inequality, and the final inequality follows from the assumption that the feasible set is a subset of the Euclidean ball. By monotonicity of $\ell^p$ norms, we also have
\begin{align}
    \sum_{ij} b_{ij}^2 &= \sum_{j}~\E{ \ba{\sigma_j^*}^2 \pa{\frac{\|v\|_{2(q-1)}}{\|v\|_{q}} }^{2(q-1)} } \le \sum_{j}~\E{ \ba{\sigma_j^*}^2 } \le 1 \,.
     \label{eq:ub-b} 
\end{align}
Finally, it is readily checked that for any $a > 0$, the following is monotone increasing in $x$ on $[a^{-1},\infty)$:
\[
    f(x) = \frac{x}{1 + \frac{1}{2}\log(ax)}\,.
\]
By Jensen's inequality and \cref{eq:ub-b}, we have $\sum_{ij}a_{ij}^2 \le \sum_{ij}b_{ij}^2 \le 1$. So, (in order of inequalities below) applying \cref{eq:l1l2-jensen} of \cref{lemma:L1L2}, then monotonicity of $f$, and then \cref{eq:ub-a} yields:
\begin{align*}
    \Var{\M_q} \le  \frac{C\sum_{ij}b_{ij}^2}{1 + \frac{1}{2}\log\pa{ \frac{\sum_{ij}b_{ij}^2} {\sum_{ij}a_{ij}^2}  }} \le \frac{C}{1 + \frac{1}{2}\log\pa{ \frac{1} {\sum_{ij}a_{ij}^2}  }} \le \frac{C}{1 + \frac{1}{2}\log\pa{ M^{1-\frac{2}{q}}}} \,.
\end{align*}
\end{proof}

\begin{remark}
    The reason that it works to apply H\"older's inequality so crudely is the following dichotomy: we are already done by Poincar\'e if $\sum_{ij} b_{ij}^2 \ll 1$. On the other hand, if $\sum_{ij} b_{ij}^2 \approx 1$ then the $q$ and $2(q-1)$ norms of $v$ are not very far apart, so $v$ must be a highly structured vector, in which case the $q$ and $q-1$ norms of $v$ must also be comparable. Then $b^2 \gg a^2$ and Talagrand's inequality easily yields a logarithmic improvement. 
\end{remark}

\begin{proof}[Proof of \cref{cor:block}]
     We adopt the notation and proof of \cref{thm:margin}. The first section on differentiability of the margin remains unchanged. Partition $[M]$ into $I_1,\dots,I_k$ so that $|I_j| = M_j$ by setting $I_1 = [M_1]$, $I_2 = \pa{M_1+1,\dots,M_1 + M_2}$, and so on.  Let $w_1 := v_{I_1}$ be the $M_1$-dimensional vector induced by taking the subset of $v$ with indices in $I_1$. Define $w_j := v_{I_j}$ similarly. Then the concatenation $(w_1, \dots w_k)$ is simply $v$. Arguing as in the proof of \cref{thm:margin}, it is easy to check by permutation symmetry:
    \begin{align*}
        \sum_{i\in I_t}\sum_j a_{ij}^2 &=  \frac{1}{M_t}\sum_j \E{|\sigma^*_j|\pa{\frac{\|w^{(t)}\|_{(q-1)}}{\|v\|_{q}} }^{(q-1)} }^2 \\
        &\le M_t^{\frac{2}{q}-1}\sum_j \E{|\sigma^*_j|\pa{\frac{\|w^{(t)}\|_q}{\|v\|_q} }^{(q-1)}}^2 \\
        &\le M_t^{\frac{2}{q}-1}\,.
    \end{align*}
    Note that by Jensen's inequality $\sum_{ij} b_{ij}^2 \ge \sum_{ij} a_{ij}^2$ always. Thus, by \cref{eq:l1l2-jensen} of \cref{lemma:L1L2}, we have:
    \begin{align*}
        \Var{\M_q} &\le C \pa{\sum_{ij} b_{ij}^2} \pa{1 + \frac{1}{2}\log \pa{\frac{\sum_{ij} b_{ij}^2 }{ \sum_{ij} a_{ij}^2 }  \vee 1 } }^{-1}  \\
        &\le C \pa{1 + \frac{1}{2}\log \pa{\frac{1}{ \sum_{t=1}^k M_t^{\frac{2}{q}-1}  } \vee 1 } }^{-1} \\&\le C \pa{1 + \frac{1}{2}\log \pa{\frac{\pa{\min_{t\in [k]} M_t}^{1 - \frac{2}{q}}}{k} \vee 1 }}^{-1} \,.
    \end{align*}
    From the first to the second line we have used monotonicity in $x$, for any $a > 0$ and all $x \in \RR$, of the function 
    \[
        f(x) = \frac{x}{1 + \frac{1}{2}\log (ax \vee 1)}\,.
    \]
\end{proof}

\section*{Acknowledgments}
I am deeply grateful to Jonathan Niles-Weed for invaluable support, advice, and mentorship throughout all stages of this project. I benefited from useful and encouraging conversations with Paul Bourgade, Guy Bresler, Brice Huang, Mark Sellke, Joel Spencer, Nike Sun, Konstantin Tikhomirov, and Will Perkins during the long exploratory phase of this project. I also thank the anonymous reviewers for helpful feedback that improved the accuracy of this work. Finally, the monograph \cite{chat-sc} of Chatterjee was an important source of inspiration for this work. This article is adapted from a chapter of the author's dissertation at NYU.

\bibliography{feas}       
\bibliographystyle{acm}

\end{document}